\NeedsTeXFormat{LaTeX2e}

\documentclass[a4paper,11pt]{amsart}

\usepackage{amsmath, amsthm, amssymb}

\newcommand{\bC}{\mathbb{C}}
\newcommand{\sP}{\mathsf{P}}
\newcommand{\bP}{\mathbb{P}}
\newcommand{\rd}{\mathrm{d}}
\newcommand{\bN}{\mathbb{N}}
\newcommand{\cS}{\mathcal{S}}
\newcommand{\can}{\operatorname{can}}
\newcommand{\Id}{\mathrm{Id}}
\newcommand{\Fix}{\operatorname{Fix}}
\newcommand{\SAT}{\mathit{SAT}}
\newcommand{\sF}{\mathsf{F}}
\newcommand{\sJ}{\mathsf{J}}
\newcommand{\Res}{\operatorname{Res}}

\numberwithin{equation}{section}
\theoremstyle{plain}
\newtheorem{theorem}{Theorem}[section]
\newtheorem{lemma}[theorem]{Lemma}
\newtheorem{proposition}[theorem]{Proposition}
\newtheorem{mainth}{Theorem}

\theoremstyle{definition}

\newtheorem{notation}[theorem]{Notation}

\newtheorem*{acknowledgement}{Acknowledgement}
\theoremstyle{remark}
\newtheorem{remark}[theorem]{Remark}

\begin{document} 

\title[Quantitative approximations of the Lyapunov exponent]{Quantitative approximations of the Lyapunov exponent of a rational function over valued fields}

\author[Y\^usuke Okuyama]{Y\^usuke Okuyama}

\address{
Division of Mathematics,
Kyoto Institute of Technology,
Sakyo-ku, Kyoto 606-8585 Japan.}
\email{okuyama@kit.ac.jp}

%\thanks{Partially supported by JSPS Grant-in-Aid for Young Scientists (B), 24740087.}

%\date{}
\date{\today}

\begin{abstract}
We establish a quantitative approximation formula of the Lyapunov exponent of
a rational function of degree more than one over an algebraically closed field
of characteristic $0$ that is complete with respect to a non-trivial and possibly non-archimedean
absolute value, in terms of the multipliers of periodic points of the rational function. 
This quantifies both our former convergence result over general fields
and the one-dimensional version of Berteloot--Dupont--Molino's one
over archimedean fields.
\end{abstract}

\subjclass[2010]{Primary 37P50; Secondary 11S82}
\keywords{periodic point, Lyapunov exponent, quantitative approximation, non-archimedean dynamics, 
complex dynamics}

\maketitle

\section{Introduction}\label{sec:intro}

In this article, we establish a quantitative logarithmic equidistribution result
for periodic points of a rational function over a more general field than 
that of the complex numbers,
using potential theory on the Berkovich projective line. 
Let $K$ be an algebraically closed field of characteristic $0$ that is complete with respect to
a non-trivial and possibly non-archimedean absolute value $|\cdot|$. 
We note that $K\cong\bC$ if and only if $K$ is archimedean.
The Berkovich projective line $\sP^1=\sP^1(K)$ over $K$ is
a compactification of the (classical) projective line $\bP^1=\bP^1(K)$ over $K$
and contains $\bP^1$ as a dense subset. We also note that $\sP^1\cong\bP^1$ if and only if
$K$ is archimedean.

Let $f\in K(z)$ be a rational function over $K$ of degree $>1$, and let $\mu_f$ be the 
equilibrium (or canonical) measure of $f$ on $\sP^1$.
The chordal derivative $f^{\#}$ of $f$ with respect to the normalized chordal metric on $\bP^1$ extends to a continuous function on $\sP^1$. 
The multiplier of a fixed point $w\in\bP^1$ of $f^n$ for some $n\in\bN$ 
is denoted by $(f^n)'(w)$.
For every fixed point $w\in\bP^1$ of $f$, we have $f^\#(w)=|f'(w)|$, 
and the function
$\log(f^\#)$ on $\sP^1$ has a logarithmic singularity at each critical point of $f$ in $\bP^1$.
The Lyapunov exponent of $f$ with respect to $\mu_f$ is defined by
\begin{gather}
 L(f):=\int_{\sP^1}\log(f^{\#})\rd\mu_f,\label{eq:lyapunov}
\end{gather}
which is in $(-\infty,\infty)$.

Our principal result is the following {\itshape quantitative} approximation of $L(f)$
by the $\log$ of the moduli of the multipliers of {\itshape non-superattracting} periodic points of $f$ in $\bP^1$, the qualitative version (i.e., with no non-trivial order estimates) of which
was obtained in \cite[Theorem 1]{OkuLog} (see also Szpiro--Tucker \cite{ST05}
for the qualitative version 
when $f$ is defined over a number field or a function field):
for each $n\in\bN$, let $\Fix(f^n)$ be the set of all fixed points 
of $f^n$ in $\bP^1$, and $\Fix^*(f^n)$ be the set of all periodic points $w$ of $f$
in $\bP^1$ having the {\itshape exact} period $n$ in that $w\in\Fix(f^n)\setminus(\bigcup_{m\in\bN:\, m|n\text{ and }m<n}\Fix(f^m))$. 

\begin{mainth}\label{th:lyapunov}
Let $f\in K(z)$ be a rational function of degree $d>1$
over an algebraically closed field $K$ of characteristic $0$ that is complete with respect to a non-trivial and possibly non-archimedean absolute value $|\cdot|$. 
Then
\begin{gather}
 L(f)=\frac{1}{nd^n}\sum_{w\in\Fix(f^n):\, (f^n)'(w)\neq 0}\log|(f^n)'(w)|+O(nd^{-n})
\quad\text{as }n\to\infty,\quad\text{and}\label{eq:quantitativeapprox}\\ 
 L(f)=\frac{1}{nd^n}
\sum_{w\in\Fix^*(f^n)}\log|(f^n)'(w)|+O(d^{-n/2})\quad\text{as }n\to\infty.\label{eq:exact} 
\end{gather}
\end{mainth}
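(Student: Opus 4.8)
The plan is to translate the multiplier sum into an integral of $\log(f^\#)$ against the counting measure on $\Fix(f^n)$, and then to establish a \emph{quantitative} form of the equidistribution $\tfrac1{d^n}\sum_{w\in\Fix(f^n)}\delta_w\to\mu_f$ that underlies \cite[Theorem 1]{OkuLog}. Fix a lift $\sF=(\sF_0,\sF_1)$ of $f$ to a homogeneous polynomial endomorphism of $K^2$ of degree $d$ with $\Res(\sF)\neq0$, write $\sF^n=(\sF_{n,0},\sF_{n,1})$ for its $n$-th iterate, and set $\Phi_n(Z):=Z\wedge\sF^n(Z)=Z_0\sF_{n,1}(Z)-Z_1\sF_{n,0}(Z)$, a homogeneous polynomial of degree $d^n+1$ whose zero divisor on $\bP^1$ is supported on $\Fix(f^n)$ with the fixed-point multiplicities. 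A periodic point is superattracting for $f$ precisely when its cycle contains a critical point, so the superattracting elements of $\Fix(f^n)$ form finitely many complete $f$-cycles, of total multiplicity $O(1)$; hence
\begin{equation*}
\nu_n:=\frac1{d^n}\sum_{w\in\Fix(f^n):\,(f^n)'(w)\neq0}\delta_w
\end{equation*}
is an $f$-invariant positive measure of mass $1-O(d^{-n})$. Combining the identity $(f^n)^\#(w)=|(f^n)'(w)|$ at $w\in\Fix(f^n)$ (the fixed-point identity of the excerpt applied to $f^n$), the chain rule $\log((f^n)^\#)=\sum_{j=0}^{n-1}\log(f^\#)\circ f^j$ (legitimate on $\operatorname{supp}\nu_n$, where every iterate avoids the critical set so all the terms are finite), and the $f$-invariance of $\nu_n$, one gets
\begin{equation*}
\frac1{nd^n}\sum_{w\in\Fix(f^n):\,(f^n)'(w)\neq0}\log|(f^n)'(w)|=\int_{\sP^1}\log(f^\#)\,\rd\nu_n .
\end{equation*}
Thus \eqref{eq:quantitativeapprox} is equivalent to the bound $\int_{\sP^1}\log(f^\#)\,\rd(\nu_n-\mu_f)=O(nd^{-n})$.

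For this I would use potential theory on $\sP^1$ relative to $\mu_f$. From the homogeneous formula $f^\#(z)=\|Z\|^2|\operatorname{Jac}\sF(Z)|/\|\sF(Z)\|^2$ (for $Z$ a lift of $z$, up to a normalizing constant) together with $f^*\mu_f=d\,\mu_f$, the function $\log(f^\#)$ equals, up to an additive constant and a bounded continuous function, the $\mu_f$-potential $\int_{\sP^1}g_{\mu_f}(\cdot,w)\,\rd\rho_f(w)$ of the signed measure $\rho_f:=[\operatorname{Crit}(f)]-(2d-2)\mu_f$ of total mass $0$, where $g_{\mu_f}$ is the $\mu_f$-Arakelov--Green function normalized by $\int_{\sP^1}g_{\mu_f}(\cdot,w)\,\rd\mu_f=0$ and $[\operatorname{Crit}(f)]$ is the critical divisor. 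Pairing with $\nu_n-\mu_f$ and applying Fubini, the $\mu_f$-part of $\rho_f$ is annihilated, the additive constant is killed by the mass defect of $\nu_n$, and the bounded continuous correction contributes only an equidistribution error for continuous test functions; one is left with
\begin{equation*}
\int_{\sP^1}\log(f^\#)\,\rd(\nu_n-\mu_f)=\sum_{c\in\operatorname{Crit}(f)}U_n(c)+\bigl(\text{equidistribution error}\bigr),\qquad U_n(w):=\int_{\sP^1}g_{\mu_f}(z,w)\,\rd(\nu_n-\mu_f)(z),
\end{equation*}
the sum being over critical points listed with multiplicity. Now $U_n$ is the $\mu_f$-potential of $\nu_n-\mu_f$, and from the divisor description of $\nu_n$ it is, up to the normalizing constant and an $n$-independent bounded term, equal to $-\tfrac1{d^n}\log|\Phi_n(Z)|$; using $\Phi_n(Z)=Z\wedge\sF^n(Z)$, the telescoping estimate $|\tfrac1{d^n}\log\|\sF^n(Z)\|-g_{\sF}(Z)|\le C_f\,d^{-n}$ for the homogeneous Green function $g_{\sF}$ (with $C_f$ bounded in terms of $\log|\Res(\sF)|$), and the comparison $g_{\mu_f}(z,w)=-\log[z,w]+O(1)$ with Hölder remainder (here $[z,w]$ is the normalized chordal metric), everything reduces to quantitative equidistribution of $\Fix(f^n)$ tested against $\log[\cdot,c]$ and against bounded Hölder functions.

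That quantitative equidistribution is the core and hardest step. Splitting $\tfrac1{d^n}\log|\Phi_n|=\tfrac1{d^n}\log\|\sF^n\|+\tfrac1{d^n}\log([z,f^n(z)]\cdot(\dots))+\tfrac1{d^n}\log\|Z\|$, the telescoping bound takes care of the first and third terms with an $O(d^{-n})$ error; the ``angle'' term is where a first power of a logarithm — hence the factor $n$ — appears, since resolving the equidistribution at the natural scale $d^{-n}$ costs $\log(d^{n})=n\log d$. Its $\mu_f$-component $\tfrac1{d^n}\int_{\sP^1}\log[z,f^n(z)]\,\rd\mu_f(z)$ is handled by combining the qualitative equidistribution \cite[Theorem 1]{OkuLog} with the modulus of continuity of $g_{\mu_f}$, truncated at scale $d^{-n}$. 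The finitely many superattracting cycles are harmless: they were deleted from $\nu_n$, they lie at a fixed positive distance from the Julia set where neither $\nu_n$ nor $\mu_f$ carries mass, and near them $f^n$ is uniformly attracting, so their effect is only $O(d^{-n})$. The delicate point — which I expect to be the main obstacle — is the pointwise estimate $U_n(c)=O(nd^{-n})$ at a critical point $c$: when $c$ lies in the Fatou set the forward orbit of $c$ stays at a bounded distance from $c$ and this is immediate, but when $c$ is recurrent one must rule out super-exponentially small return, i.e.\ prove $[c,f^n(c)]\ge d^{-O(n)}$ for a non-periodic critical point, again through the resultant estimates.

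Finally, \eqref{eq:exact} follows from \eqref{eq:quantitativeapprox} by comparing the two sums. The points in $\Fix(f^n)\setminus\Fix^*(f^n)$ have exact period $m$ a proper divisor of $n$, so their number is at most $\sum_{m\mid n,\ m<n}(d^m+1)=O(d^{n/2})$, the dominant term being $m=n/2$. For such a $w$ of exact period $m$ one has $(f^n)'(w)=\bigl((f^m)'(w)\bigr)^{n/m}$, and $\bigl|\log|(f^m)'(w)|\bigr|=O(m)$ uniformly over non-superattracting periodic points of period $m$ (each of the $m$ factors $f^\#(f^j(w))$ is at most $\sup_{\sP^1}f^\#$, and bounded below by a resultant estimate controlling how near a point of $\Fix(f^m)$ can come to the critical set), whence $\bigl|\log|(f^n)'(w)|\bigr|=O(n)$. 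Summing and dividing by $nd^n$, the two sums differ by $O(d^{n/2})\cdot O(d^{-n})=O(d^{-n/2})$, and since the error term in \eqref{eq:quantitativeapprox} is $O(nd^{-n})=o(d^{-n/2})$, \eqref{eq:exact} follows.
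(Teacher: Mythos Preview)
Your reduction of \eqref{eq:quantitativeapprox} to potential theory at the critical points is the same framework as the paper's: $\log(f^\#)$ differs from $\sum_{c\in C(f)}\Phi_{g_f}(\cdot,c)$ by the constant $L(f)$ and a coboundary (this is \eqref{eq:formula}), so integrating against $[f^n=\Id]$ reduces to the values $U_{g_f,[f^n=\Id]}(c)$, which by \eqref{eq:Riesz} split into a pointwise ``angle'' term $\Phi_{g_f}(f^n(c),c)$ and the global integral $\int_{\sP^1}\Phi(f^n,\Id)_{g_f}\,\rd\mu_f$. The pointwise term you handle correctly in outline; the paper's precise input for a Julia critical point is Przytycki's lemma $[f^n(c),c]\ge 1/(20L^n)$, not a resultant bound, and for a Fatou critical point one uses the classification of cyclic Berkovich Fatou components.

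The real gap is the global integral. Your claim that $\tfrac{1}{d^n}\int_{\sP^1}\log[z,f^n(z)]\,\rd\mu_f$ is ``handled by combining the qualitative equidistribution with the modulus of continuity of $g_{\mu_f}$, truncated at scale $d^{-n}$'' does not work: qualitative equidistribution carries no rate, and after truncating the logarithmic singularity at scale $d^{-n}$ you are left testing weak convergence against a function of sup norm $O(n)$, which again requires a quantitative rate you do not have. The paper's device here is different and is the crux of the proof: one integrates \eqref{eq:formula} against $[f^n=\Id]$ \emph{twice}, once as written (so the global integral appears with coefficient $(2d-2)/d^n$, see Lemma~\ref{th:simple}) and once with $f$ replaced by $f^n$ (same left-hand side, but now the integral carries coefficient $(2d^n-2)/(nd^n)\sim 2/n$, see Lemma~\ref{th:lower} and \eqref{eq:iterates}). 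The remaining terms in the two computations are $O(nd^{-n})$ and $O(1)$ respectively, so equating them forces $\int_{\sP^1}\Phi(f^n,\Id)_{g_f}\,\rd\mu_f=O(n)$, which is \eqref{eq:dynamicallinear}. Without this comparison trick there is no control of the normalizing constant in your $U_n$.

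Your derivation of \eqref{eq:exact} also has a gap. The uniform bound $\bigl|\log|(f^m)'(w)|\bigr|=O(m)$ over all non-superattracting $w\in\Fix^*(f^m)$ would require a uniform lower bound on $[f^j(w),C(f)]$; the ``resultant estimate'' you invoke does not give this, since the relevant resultant vanishes whenever $\SAT(f)\neq\emptyset$, and even after removing common factors there is no a~priori exponential-in-$m$ lower bound on individual multipliers over a general non-archimedean $K$. The paper sidesteps this entirely: via M\"obius inversion the difference of the two sums in \eqref{eq:quantitativeapprox} and \eqref{eq:exact} is controlled by $\tfrac{1}{d^n}\sum_{m\mid n,\,m<n}d^m\bigl|\tfrac{1}{md^m}\sum_{w\in\Fix(f^m)\setminus\SAT(f)}\log|(f^m)'(w)|\bigr|$, and the inner quantity is $L(f)+O(md^{-m})=O(1)$ by the already-proved \eqref{eq:quantitativeapprox}. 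No pointwise control of multipliers is needed.
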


In the case that $f$ has at most finitely many attracting 
%but not superattracting
periodic points in $\bP^1$, 
Theorem \ref{th:lyapunov} yields the following
quantitative approximations of $L(f)$ by the multipliers of 
{\itshape repelling} periodic points of $f$ in $\bP^1$: for each $n\in\bN$, set
$R(f^n):=\{w\in\Fix(f^n):|(f^n)'(w)|>1\}$
and $R^*(f^n):=\{w\in\Fix^*(f^n):|(f^n)'(w)|>1\}$.

\begin{mainth}\label{th:repelling}
Let $f\in K(z)$ be a rational function of degree $d>1$
over an algebraically closed field $K$ of characteristic $0$ that is complete with respect to a non-trivial and possibly non-archimedean absolute value $|\cdot|$. 

If $f$ has at most finitely many attracting 
%but not superattracting 
periodic points in $\bP^1$, then
\begin{gather}
 L(f)=\frac{1}{nd^n}\sum_{w\in R(f^n)}\log|(f^n)'(w)|+O(nd^{-n})\quad\text{as }n\to\infty,\quad\text{and}\tag{\ref{eq:quantitativeapprox}'}\label{eq:repelling}\\
 L(f)=\frac{1}{nd^n}\sum_{w\in R^*(f^n)}\log|(f^n)'(w)|
+O(d^{-n/2})\quad\text{as }n\to\infty.
\tag{\ref{eq:exact}'}\label{eq:strict}
\end{gather}
\end{mainth}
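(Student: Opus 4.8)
The plan is to obtain Theorem~\ref{th:repelling} from Theorem~\ref{th:lyapunov} as a soft corollary: since Theorem~\ref{th:lyapunov} already carries all the quantitative potential-theoretic content, it remains only to verify that, under the finiteness hypothesis, deleting the non-repelling periodic points from the sums in \eqref{eq:quantitativeapprox} and \eqref{eq:exact} perturbs the corresponding averages by $O(d^{-n})$, which is swallowed by the error terms already present.

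First I would identify the set being deleted. For each $n$, a point $w\in\Fix(f^n)$ belongs to the index set of the sum in \eqref{eq:quantitativeapprox} but not to $R(f^n)$ precisely when $0<|(f^n)'(w)|\le 1$, and, for $n$ exceeding the finitely many periods of the superattracting cycles of $f$ (so that the multiplier-zero points are irrelevant to \eqref{eq:exact}), $\Fix^*(f^n)$ differs from $R^*(f^n)$ exactly by the set $\{w\in\Fix^*(f^n):\,0<|(f^n)'(w)|\le 1\}$. Any such $w$ lies on a cycle of some exact period $p\mid n$, and, writing $\lambda:=(f^p)'(w)$ for the multiplier of that cycle, the chain rule and the multiplicativity of $|\cdot|$ give $|(f^n)'(w)|=|\lambda|^{n/p}$, hence $\log|(f^n)'(w)|=(n/p)\log|\lambda|$. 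In particular every \emph{indifferent} periodic point contributes $0$ to both sums; this is the point on which the argument turns, because in the non-archimedean case there may be infinitely many such points and so a counting estimate would be unavailable --- but fortunately no such estimate is needed for them.

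It then remains to control the attracting cycles, and here the hypothesis is used. Let $C_1,\dots,C_m$ be the finitely many attracting, non-superattracting cycles of $f$, of exact periods $p_1,\dots,p_m$ and multipliers $\lambda_1,\dots,\lambda_m$ with $0<|\lambda_j|<1$, and put $C_f:=\sum_{j=1}^m\bigl|\log|\lambda_j|\bigr|$, a constant depending only on $f$. For each $j$ with $p_j\mid n$, the cycle $C_j$ contributes its $p_j$ points to $\Fix(f^n)$, each carrying $\log|(f^n)'(w)|=(n/p_j)\log|\lambda_j|$, so $C_j$ contributes $n\log|\lambda_j|$ in total; summing over all such $j$, the difference between the sum in \eqref{eq:quantitativeapprox} and the corresponding sum over $R(f^n)$ has absolute value at most $C_f n$, hence at most $C_f d^{-n}$ after the normalization by $1/(nd^n)$. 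Absorbing this $O(d^{-n})$ into $O(nd^{-n})$ yields \eqref{eq:repelling}. For the exact-period statement only the cycles with $p_j=n$ enter the analogous difference, and the same computation bounds it by $C_f d^{-n}=O(d^{-n})$ after normalization, which is absorbed into $O(d^{-n/2})$ and gives \eqref{eq:strict}.

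With Theorem~\ref{th:lyapunov} available there is no genuine obstacle; the two things that need care are the bookkeeping of multiplicities --- a cycle of exact period $p$ with $p\mid n$ accounts for exactly $p$ points of $\Fix(f^n)$, each with multiplier $\lambda^{n/p}$ --- and the fact that the (possibly infinite) family of indifferent periodic points must be handled through the identity $\log|(f^n)'(w)|=0$ rather than by bounding its cardinality. If one wished to dispense with the hypothesis one could instead invoke a Fatou-type bound on the number of attracting cycles of a rational map of degree $d$, after which the estimate above applies verbatim; but under the stated hypothesis this is unnecessary.
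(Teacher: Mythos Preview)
Your proposal is correct and follows essentially the same route as the paper's proof in Section~\ref{sec:complex}: both deduce Theorem~\ref{th:repelling} from Theorem~\ref{th:lyapunov} by observing that indifferent periodic points contribute $\log 1=0$ to the sums, while the (by hypothesis finitely many) genuinely attracting cycles contribute at most $O(n)$ before normalization, hence $O(d^{-n})$ afterwards. The only cosmetic difference is that the paper phrases the finiteness hypothesis as the existence of a threshold $N$ with $|(f^n)'(w)|\ge 1$ for every $w\in\Fix^*(f^n)$ and $n\ge N$, and then decomposes $\Fix(f^n)$ by exact period relative to this $N$, whereas you enumerate the attracting cycles directly; the content is the same.
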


For archimedean $K$, the finiteness assumption in Theorem \ref{th:repelling}
always holds (cf.\ \cite[Theorem 8.6]{Milnor3rd});
for non-archimedean $K$, any
periodic point of the polynomial $f(z)=z^d$ is (super) attracting if $|d|<1$.
The qualitative version (i.e., with no non-trivial order estimates) 
of Theorem \ref{th:repelling} for archimedean $K$ also follows from
Berteloot--Dupont--Molino \cite[Corollary 1.6]{BDM08};
see also Berteloot \cite{BertelootLyapunov}.
Their proofs are based on the positivity of $L(f)$, which is the case
for archimedean $K$ by Ruelle's inequality; 
for non-archimedean $K$, the polynomial $f(z)=z^d$ has
$L(f)=\log|d|\le 0$.
Our proofs of Theorems \ref{th:lyapunov} and \ref{th:repelling}
are independent of whether $L(f)$ is positive or not.

A bit surprisingly, the proofs of Theorems \ref{th:lyapunov} and \ref{th:repelling}
are independent of the equidistribution theorem for periodic points of $f$ in $\bP^1$
towards the equilibrium measure $\mu_f$, which was
due to Lyubich \cite[Theorem 3]{Lyubich83} for archimedean $K$ and
due to Favre--Rivera-Letelier \cite[Th\'eor\`eme B]{FR09} 
for non-archimedean $K$ of characteristic $0$. 

\subsection*{Organization of this article}
In Section \ref{sec:background}, we prepare a background on dynamics of rational functions
over general fields. 
In Section \ref{sec:quantitative}, we show Theorem \ref{th:lyapunov}:
let us remark that the proof can be simplified if there are no superattracting periodic points (see Lemma \ref{th:lower}). 
In Section \ref{sec:complex}, we show Theorem \ref{th:repelling} based on Theorem \ref{th:lyapunov}.

\section{Background}
\label{sec:background}
For the foundations of potential theory on $\sP^1$, see 
\cite[\S 5 and \S 8]{BR10}, \cite[\S 7]{FJbook}, 
\cite[\S 1-\S 4]{Jonsson12}, \cite[Chapter III]{Tsuji59}. 
For a potential theoretic study of dynamics on $\sP^1$,
see \cite[\S 10]{BR10}, \cite[\S 3]{FR09}, \cite[\S 5]{Jonsson12}, \cite[Chapitre VIII]{BM01}. 
See also \cite{Benedetto10}, \cite{Juan03} including non-archimedean dynamics.

\subsection*{Chordal metric on $\bP^1$}
Let $K$ be an algebraically closed field complete with respect to
a non-trivial and possibly non-archimedean absolute value $|\cdot|$. 
For a while, we allow $K$ to have any characteristic.
Let $\|(p_0,p_1)\|$ be the maximum norm 
$\max\{|p_0|,|p_1|\}$ 
on $K^2$ (for non-archimedean $K$) 
or the Euclidean norm $\sqrt{|p_0|^2+|p_1|^2}$ on $K^2$ (for archimedean $K$).
%Let $\|\cdot\|$ be the maximal norm (for non-archimedean $K$) or the Euclidean
%norm (for archimedean $K$) on $K^2$. 
The origin of $K^2$ is also denoted by $0$, and 
$\pi$ is the canonical projection $K^2\setminus\{0\}\to\bP^1=\bP^1(K)$.
Setting the wedge product $(z_0,z_1)\wedge(w_0,w_1):=z_0w_1-z_1w_0$ on $K^2\times K^2$,
the normalized chordal metric $[z,w]$ on $\bP^1$ is the function
\begin{gather}
 (z,w)\mapsto [z,w]:=|p\wedge q|/(\|p\|\cdot\|q\|)(\le 1)\label{eq:chordaldist}
\end{gather}
on $\bP^1\times\bP^1$, where $p\in\pi^{-1}(z),q\in\pi^{-1}(w)$. Although
the topology of the Berkovich projective line $\sP^1=\sP^1(K)$,
which is a compactification of $\bP^1$, 
is not always metrizable, the relative topology of $\bP^1$ 
coincides with the metric topology on $\bP^1$ induced by the normalized
chordal metric. 

\subsection*{Hsia kernel on $\sP^1$}
Let $\delta_{\cS}$ be the Dirac measure on $\sP^1=\sP^1(K)$ at $\cS\in\sP^1$.
Let $\Omega_{\can}$ be the Dirac measure $\delta_{\cS_{\can}}$ at the canonical
(or Gauss) point $\mathcal{S}_{\can}\in\sP^1$ for non-archimedean
$K$ (\cite[\S1.2]{BR10}, \cite[\S2.1]{FR09}) or
the Fubini--Study area element $\omega$ on $\bP^1$ 
normalized as $\omega(\bP^1)=1$ for archimedean $K$. 
For non-archimedean $K$, the normalized chordal metric on $\bP^1$
canonically extends to the {\itshape generalized Hsia kernel $[\cS,\cS']_{\can}$
with respect to $\mathcal{S}_{\can}$ on} $\sP^1$ (for the construction,
see \cite[\S4.4]{BR10}, \cite[\S 2.1]{FR09}),
which vanishes if and only if $\cS=\cS'\in\bP^1$. 
For archimedean $K$, $[z,w]_{\can}$ is defined by $[z,w]$, by convention.
Let $\Delta$ be the Laplacian on $\sP^1$
(for the construction in the non-archimedean case,
see \cite[\S 5]{BR10}, \cite[\S7.7]{FJbook}, \cite[\S 3]{Thuillierthesis})
normalized so that for each $\cS\in\sP^1$, 
\begin{gather*}
 \Delta\log[\cdot,\cS]_{\can}=\delta_{\cS}-\Omega_{\can}\quad\text{on }\sP^1 
\end{gather*}
(for non-archimedean $K$, see \cite[Example 5.19]{BR10}, \cite[\S2.4]{FR09}: 
in \cite{BR10} the opposite sign convention on $\Delta$ is adopted).

\subsection*{Potential theory on $\sP^1$}
A {\itshape continuous weight $g$ on} $\sP^1$ is
a continuous function on $\sP^1$ such that $\mu^g:=\Delta g+\Omega_{\can}$
is a probability Radon measure on $\sP^1$. For a continuous weight $g$ on $\sP^1$,
the $g$-potential kernel 
\begin{gather*}
 \Phi_g(\cS,\cS'):=\log[\cS,\cS']_{\can}-g(\cS)-g(\cS')
\end{gather*}
(the negative of an Arakelov Green function of $\mu^g$) 
on $\sP^1$ is an upper semicontinuous function on $\sP^1\times\sP^1$.
The ($\exp$ of the) $\Phi_g$ is
separately continuous in each of the variables $\cS,\cS'\in\sP^1$, and
it introduces the $g$-potential 
\begin{gather*}
 U_{g,\nu}(\cdot):=\int_{\sP^1}\Phi_g(\cdot,\cS)\rd\nu(\cS)
\end{gather*}
on $\sP^1$ of each Radon measure $\nu$ on $\sP^1$. By the Fubini theorem,
$\Delta U_{g,\nu}=\nu-\nu(\sP^1)\mu^g$ on $\sP^1$. 
The $g$-equilibrium energy $V_g\in[-\infty,+\infty)$ of $\sP^1$ 
is the supremum of the energy functional
\begin{gather*}
 \nu\mapsto\int_{\sP^1\times\sP^1}\Phi_g\rd(\nu\times\nu)
\end{gather*}
on the space of all probability Radon measures on $\sP^1$. 
Indeed $V_g\in(-\infty,\infty)$ since 
$\int_{\sP^1\times\sP^1}\Phi_g\rd(\Omega_{\can}\times\Omega_{\can})
>-\infty$.
The variational characterization of $\mu^g$
asserts that the above energy functional attains the supremum uniquely at $\nu=\mu^g$.
Moreover,
\begin{gather*}
 U_{g,\mu^g}\equiv V_g\quad\text{on }\sP^1 
\end{gather*}
(for non-archimedean $K$, see \cite[Theorem 8.67 and Proposition 8.70]{BR10}).
A continuous weight $g$ on $\sP^1$ is a {\itshape normalized weight on} $\sP^1$ if
$V_g=0$. For a continuous weight $g$ on $\sP^1$,
$\overline{g}:=g+V_g/2$ is the unique normalized weight on $\sP^1$ 
satisfying $\mu^{\overline{g}}=\mu^g$.

\subsection*{Equilibrium measure $\mu_f$}
A rational function $f\in K(z)$ of degree $d>1$
extends to a continuous, surjective, open, and discrete 
endomorphism of $\sP^1$, preserving $\bP^1$ and $\sP^1\setminus\bP^1$, respectively,
and induces a push-forward $f_*$ and a pullback $f^*$ 
on the space of continuous functions on $\sP^1$ and, by duality,
on the space of
Radon measures on $\sP^1$ (\cite[\S9]{BR10}, \cite[\S2.2]{FR09}). 
A {\itshape non-degenerate homogeneous lift} 
$F=(F_0,F_1)$ of (the unextended) $f$ is a homogeneous polynomial endomorphism on $K^2$ 
such that $\pi\circ F=f\circ\pi$ on $K^2\setminus\{0\}$
and that $F^{-1}(0)=\{0\}$. 
The latter condition is equivalent to $\Res F\in K\setminus\{0\}$
(for the definition of the homogeneous resultant $\Res F=\Res(F_0,F_1)$ of $F$,
see, e.g., \cite[\S2.4]{SilvermanDynamics}).
Such $F$ is unique up to multiplication in $K\setminus\{0\}$, and
has the algebraic degree $d$, i.e., $\deg F_0=\deg F_1=\deg f=d$.
For every $n\in\bN$, $F^n$ is a non-degenerate homogeneous lift of $f^n$, 
and the function 
\begin{gather}
 T_{F^n}:=\log\|F^n\|-d^n\log\|\cdot\|\label{eq:descend}
\end{gather} 
on $K^2\setminus\{0\}$ descends to $\bP^1$ and in turn extends continuously to $\sP^1$,
satisfying $\Delta T_{F^n}=(f^n)^*\Omega_{\can}-d^n\Omega_{\can}$
on $\sP^1$ (see, e.g., \cite[Definition 2.8]{Okucharacterization}). 
The uniform limit 
$g_F:=\lim_{n\to\infty}T_{F^n}/d^n$ on $\sP^1$ exists and is indeed a continuous weight on $\sP^1$.
The {\itshape equilibrium $($or canonical$)$ measure of} $f$ is
the probability Radon measure
\begin{gather*}
 \mu_f:=\Delta g_F+\Omega_{\can}
=\lim_{n\to\infty}d^{-n}(f^n)^*\Omega_{\can}\quad\text{weakly on }\sP^1,
\end{gather*}
which is independent of the choice of the lift $F$ 
and satisfies that $f^*\mu_f=d\cdot\mu_f$
and $f_*\mu_f=\mu_f$ on $\sP^1$ (for non-archimedean $K$,
see \cite[\S10]{BR10}, \cite[\S2]{ChambertLoir06}, \cite[\S3.1]{FR09}).
The {\itshape dynamical Green function $g_f$ of $f$ on} $\sP^1$ is 
the unique normalized weight on $\sP^1$ such that $\mu^{g_f}=\mu_f$.
By the energy formula 
$V_{g_F}=-(\log|\Res F|)/(d(d-1))$ (due to DeMarco \cite{DeMarco03} 
for archimedean $K$
and due to Baker--Rumely \cite{BR06} 
%for $K=\bC_v$ associated to each places $v$ of 
when $f$ is defined over a number field;
see \cite[Appendix A]{Baker09} for a simple proof which works for general $K$)
and $\Res(cF)=c^{2d}\Res F$ for each $c\in K\setminus\{0\}$
(cf.\ \cite[Proposition 2.13(b)]{SilvermanDynamics}),
there is a non-degenerate homogeneous lift $F$ of $f$ satisfying
$V_{g_F}=0$, or equivalently, that $g_F=g_f$ on $\sP^1$.
%, which is unique up to multiplication in $\{z\in K:|z|=1\}$.
We note that 
$U_{g_f,\mu_f}\equiv 0$ on $\sP^1$ 
and that for every $n\in\bN$, $\mu_{f^n}=\mu_f$ and $g_{f^n}=g_f$ on $\sP^1$.

\subsection*{Logarithmic proximity function $\Phi(f^n,\Id)_{g_f}$}
For more details on
the following, see \cite[Proposition 2.9]{Okucharacterization}.

\begin{proposition}
 For rational functions $\phi_i\in K(z)$ of degree $d_i$, $i\in\{1,2\}$,
 on $\bP^1$ satisfying $\phi_1\not\equiv\phi_2$ and $\max\{d_1,d_2\}>0$, 
 the function $z\mapsto[\phi_1(z),\phi_2(z)]$ on $\bP^1$
 extends continuously to {\itshape a function 
 $\cS\mapsto[\phi_1,\phi_2]_{\can}(\cS)$ on} $\sP^1$.
\end{proposition}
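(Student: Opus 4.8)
The plan is to express $z\mapsto[\phi_1(z),\phi_2(z)]$ in homogeneous coordinates and to decompose its logarithm into pieces already known to extend continuously to $\sP^1$. For $i\in\{1,2\}$, fix a homogeneous polynomial endomorphism $\Phi_i=(\Phi_{i,0},\Phi_{i,1})$ of $K^2$ with $\pi\circ\Phi_i=\phi_i\circ\pi$ on $K^2\setminus\{0\}$ and with each $\Phi_{i,j}$ homogeneous of degree $d_i$ or identically $0$; for $d_i\geq 1$ take $\Phi_i$ reduced, i.e.\ $\gcd(\Phi_{i,0},\Phi_{i,1})=1$, so that $\Phi_i^{-1}(0)=\{0\}$, while for $d_i=0$ take $\Phi_i$ a nonzero constant in $K^2$. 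In either case $\Phi_i(p)\neq 0$ for every $p\in K^2\setminus\{0\}$ and $\pi(\Phi_i(p))=\phi_i(\pi(p))$. Put $H:=\Phi_1\wedge\Phi_2=\Phi_{1,0}\Phi_{2,1}-\Phi_{1,1}\Phi_{2,0}$. By the definition \eqref{eq:chordaldist} of the normalized chordal metric, for every $z\in\bP^1$ and $p\in\pi^{-1}(z)$,
\begin{align*}
 \log[\phi_1(z),\phi_2(z)]
 &=\log|H(p)|-\log\|\Phi_1(p)\|-\log\|\Phi_2(p)\|\\
 &=A(z)-B_1(z)-B_2(z),
\end{align*}
where $A(z):=\log|H(p)|-(d_1+d_2)\log\|p\|$ and $B_i(z):=\log\|\Phi_i(p)\|-d_i\log\|p\|$; by the homogeneity of $H$ and of $\Phi_i$ these are independent of the choice of $p\in\pi^{-1}(z)$, hence descend to functions on $\bP^1$.

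The key structural point is that $H$ is homogeneous of degree exactly $d_1+d_2$, which is positive since $\max\{d_1,d_2\}>0$. Indeed, each of $\Phi_{1,0}\Phi_{2,1}$ and $\Phi_{1,1}\Phi_{2,0}$ is either homogeneous of degree $d_1+d_2$ or the zero polynomial, hence so is $H$ (or $H\equiv 0$); and $H\equiv 0$ would mean $\Phi_1(p)\wedge\Phi_2(p)=0$ for every $p\neq 0$, i.e.\ $\phi_1(z)=\phi_2(z)$ for every $z\in\bP^1$, contrary to $\phi_1\not\equiv\phi_2$. This is precisely what makes the decomposition $A-B_1-B_2$ legitimate on $\bP^1$: were $\deg H<d_1+d_2$, a residual multiple of $\log\|p\|$, which does not descend to $\bP^1$, would survive in $A$. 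I expect this bookkeeping — the degree count together with the degenerate case $d_i=0$ — to be the only genuine obstacle; the remaining steps just assemble facts recorded in Section \ref{sec:background}.

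It remains to see that $A$, $B_1$, $B_2$ extend continuously to $\sP^1$. For $d_i\geq 1$, the function $B_i=\log\|\Phi_i\|-d_i\log\|\cdot\|$ is of the same type as $T_{F^n}$ recalled in Section \ref{sec:background}, now for the non-degenerate homogeneous lift $\Phi_i$ of $\phi_i$, so it descends to $\bP^1$ and extends continuously to $\sP^1$; for $d_i=0$, $B_i$ is constant. For $A$, since $K$ is algebraically closed we factor $H(p)=c\prod_{k=1}^{d_1+d_2}\ell_k(p)$ into linear forms $\ell_k$ with $c\in K\setminus\{0\}$, write $\ell_k(p)=p\wedge q_k$ with $q_k\in K^2\setminus\{0\}$, and set $z_k:=\pi(q_k)\in\bP^1\subset\sP^1$; then on $\bP^1$,
\begin{align*}
 A(z)&=\log|c|+\sum_{k=1}^{d_1+d_2}\bigl(\log|p\wedge q_k|-\log\|p\|\bigr)\\
 &=\Bigl(\log|c|+\sum_{k=1}^{d_1+d_2}\log\|q_k\|\Bigr)+\sum_{k=1}^{d_1+d_2}\log[z,z_k].
\end{align*}
By the canonical extension of the normalized chordal metric to the generalized Hsia kernel recalled in Section \ref{sec:background}, each $\log[\cdot,z_k]$ extends continuously, as a $[-\infty,0]$-valued function, to $\log[\cdot,z_k]_{\can}$ on $\sP^1$, so $A$ extends continuously to $\sP^1$ as well. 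Consequently $A-B_1-B_2$ extends continuously to $\sP^1$ as a $[-\infty,\infty)$-valued function, and its exponential is the desired continuous extension $\cS\mapsto[\phi_1,\phi_2]_{\can}(\cS)$ of $z\mapsto[\phi_1(z),\phi_2(z)]$; it takes values in $[0,1]$ by the density of $\bP^1$ in $\sP^1$.
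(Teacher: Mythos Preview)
The paper does not prove this proposition itself; it merely cites \cite[Proposition 2.9]{Okucharacterization} for the details. Your argument is correct and is the natural one---indeed, decomposing $\log[\phi_1,\phi_2]$ into a sum of terms $\log[\cdot,z_k]$ (each extending to $\log[\cdot,z_k]_{\can}$) minus terms of the shape $\log\|\Phi_i\|-d_i\log\|\cdot\|$ (each extending as in \eqref{eq:descend}) is precisely the mechanism the paper itself uses for $\log(f^\#)$ in \eqref{eq:derivative}. One minor point: the continuous extension of $B_i=\log\|\Phi_i\|-d_i\log\|\cdot\|$ to $\sP^1$ is recorded in Section~\ref{sec:background} only for non-degenerate lifts of maps of degree $>1$, whereas you need it for $d_i\ge 1$; the extension holds in that generality as well (this is what \cite[Definition 2.8]{Okucharacterization} actually covers), so it would be cleaner to cite that reference directly rather than appealing to Section~\ref{sec:background}.
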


For each $n\in\bN$,
we introduce the {\itshape logarithmic proximity function weighted by $g_f$
\begin{gather*}
 \Phi(f^n,\Id)_{g_f}(\cdot):=\log[f^n,\Id]_{\can}(\cdot)-g_f\circ f^n(\cdot)-g_f(\cdot)
\end{gather*}
between $f^n$ and $\Id$ on} $\sP^1$, and set
\begin{gather}
 [f^n=\Id]:=\sum_{w\in\bP^1:f^n(w)=w}\delta_w\quad\text{on }\sP^1,\label{eq:roots} 
\end{gather}
where the sum takes into account the multiplicity of each
root $w\in\bP^1$ of $f^n=\Id$. 

For a proof of the following, see, e.g., \cite[Lemma 2.19]{Okucharacterization}.

\begin{lemma}[{cf. \cite[(1.4)]{Sodin92}}]
For every $n\in\bN$,
\begin{gather*}
 \Phi(f^n,\Id)_{g_f}(\cdot)
 =U_{g_f,[f^n=\Id]-(d^n+1)\mu_f}+\int_{\sP^1}\Phi(f^n,\Id)_{g_f}\rd\mu_f
\end{gather*}
on $\sP^1$. Since $U_{g_f,\mu_f}\equiv 0$ on $\sP^1$, this is rewritten as
\begin{gather}
 \Phi(f^n,\Id)_{g_f}(\cdot)
 =U_{g_f,[f^n=\Id]}+\int_{\sP^1}\Phi(f^n,\Id)_{g_f}\rd\mu_f\quad\text{on }\sP^1.\label{eq:Riesz}
\end{gather}
\end{lemma}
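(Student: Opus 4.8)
\emph{Proof plan.} The identity is a Riesz decomposition of $\Phi(f^n,\Id)_{g_f}$ on $\sP^1$: I would compute its Laplacian, check that it agrees with that of the $g_f$-potential on the right-hand side, and then use that two functions on $\sP^1$ with the same Laplacian differ by a constant, fixing the constant by integrating against $\mu_f$.

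For the Laplacian, I differentiate $\Phi(f^n,\Id)_{g_f}=\log[f^n,\Id]_{\can}-g_f\circ f^n-g_f$ term by term. Writing $\log[f^n,\Id]_{\can}$ on $\sP^1$ as the difference of the descent to $\sP^1$ of $\log|F^n(v)\wedge v|-(d^n+1)\log\|v\|$ (invariant under scaling of $v\in K^2\setminus\{0\}$ since $F^n(v)\wedge v$ is homogeneous in $v$ of degree $d^n+1$, hence descending to $\sP^1$) and the function $T_{F^n}$ of \eqref{eq:descend}, and using $\Delta T_{F^n}=(f^n)^*\Omega_{\can}-d^n\Omega_{\can}$ from Section~\ref{sec:background}, one obtains $\Delta\log[f^n,\Id]_{\can}=[f^n=\Id]-(f^n)^*\Omega_{\can}-\Omega_{\can}$ on $\sP^1$, with $[f^n=\Id]$ as in \eqref{eq:roots}, of total mass $d^n+1$. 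Next, by the pullback formula $\Delta(h\circ f^n)=(f^n)^*(\Delta h)$ together with $\Delta g_f=\mu_f-\Omega_{\can}$ and $f^*\mu_f=d\cdot\mu_f$, one has $\Delta(g_f\circ f^n)=(f^n)^*(\mu_f-\Omega_{\can})=d^n\mu_f-(f^n)^*\Omega_{\can}$. Since also $\Delta g_f=\mu_f-\Omega_{\can}$, subtracting the three Laplacians makes the two $(f^n)^*\Omega_{\can}$-terms cancel and the $\Omega_{\can}$-terms cancel, leaving $\Delta\Phi(f^n,\Id)_{g_f}=[f^n=\Id]-(d^n+1)\mu_f$ on $\sP^1$.

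Now put $\nu:=[f^n=\Id]-(d^n+1)\mu_f$, a signed Radon measure with $\nu(\sP^1)=0$, so that $\Delta U_{g_f,\nu}=\nu-\nu(\sP^1)\mu^{g_f}=\nu=\Delta\Phi(f^n,\Id)_{g_f}$ on $\sP^1$. Hence $\Phi(f^n,\Id)_{g_f}-U_{g_f,\nu}$ has identically vanishing Laplacian on the connected compact space $\sP^1$; since the two functions have matching logarithmic ($-\infty$) singularities along $\Fix(f^n)$, their difference is a finite-valued function, hence constant, say $\equiv C$. Integrating against the probability measure $\mu_f$, Fubini's theorem and the symmetry of $\Phi_{g_f}$ give $\int_{\sP^1}U_{g_f,\nu}\rd\mu_f=\int_{\sP^1}U_{g_f,\mu_f}\rd\nu=0$, since $U_{g_f,\mu_f}\equiv 0$ on $\sP^1$; whence $C=\int_{\sP^1}\Phi(f^n,\Id)_{g_f}\rd\mu_f$, which is the first asserted identity. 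The second form follows from the linearity of $\nu\mapsto U_{g_f,\nu}$ and $U_{g_f,\mu_f}\equiv 0$, giving $U_{g_f,\nu}=U_{g_f,[f^n=\Id]}-(d^n+1)U_{g_f,\mu_f}=U_{g_f,[f^n=\Id]}$.

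The step I expect to be the main obstacle is the potential-theoretic rigidity used above — that a function on $\sP^1$ with identically vanishing Laplacian is constant — to be applied uniformly in the archimedean and non-archimedean cases, together with the verification that $\Phi(f^n,\Id)_{g_f}$ and $U_{g_f,\nu}$ genuinely have matching $-\infty$ behaviour along $\Fix(f^n)$, so that their difference lies in the class of functions for which that rigidity holds (the calculus of strongly harmonic functions on the Berkovich line when $K$ is non-archimedean, and classical potential theory on $\bP^1(\bC)$ when $K$ is archimedean). The remainder is routine bookkeeping of Laplacians and an application of Fubini's theorem.
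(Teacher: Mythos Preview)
The paper does not supply its own proof of this lemma; it simply refers the reader to \cite[Lemma 2.19]{Okucharacterization}. So there is no in-paper argument to compare against.

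Your approach---compute $\Delta\Phi(f^n,\Id)_{g_f}$ termwise, match it with $\Delta U_{g_f,\nu}$ for $\nu=[f^n=\Id]-(d^n+1)\mu_f$, invoke that a function on $\sP^1$ with vanishing Laplacian is constant, and fix the constant by integrating against $\mu_f$---is exactly the standard Riesz-decomposition argument and is correct. Your Laplacian bookkeeping checks out: the factorisation $\log[f^n,\Id]_{\can}=(\text{descent of }\log|F^n(v)\wedge v|-(d^n+1)\log\|v\|)-T_{F^n}$ is right, and the resulting cancellations give $\Delta\Phi(f^n,\Id)_{g_f}=[f^n=\Id]-(d^n+1)\mu_f$ as you claim. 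The Fubini step to compute $C$ is also fine.

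You have correctly flagged the one genuine technical point: the rigidity statement ``$\Delta h=0$ on $\sP^1$ implies $h$ constant'' requires $h$ to lie in the appropriate class (BDV in the sense of Baker--Rumely \cite[\S5]{BR10} in the non-archimedean case, $L^1$ with distributional Laplacian zero in the archimedean case). Here the difference $\Phi(f^n,\Id)_{g_f}-U_{g_f,\nu}$ is not merely finite-valued but continuous on $\sP^1$, since near each $w\in\Fix(f^n)$ both functions behave like $m_w\log[\cdot,w]_{\can}+(\text{continuous})$ with the same multiplicity $m_w$; so the rigidity applies. With that detail filled in, the argument is complete.
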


\subsection*{Chordal derivative $f^\#$}
The multiplier of a fixed point $w\in\bP^1$ of 
$f^n$ for some $n\in\bN$ is denoted by $(f^n)'(w)$.
A fixed point $w\in\bP^1$ of $f^n$ for some $n\in\bN$
is said to be superattracting, attracting, or repelling
if $(f^n)'(w)=0$, $|(f^n)'(w)|<1$, or $|(f^n)'(w)|>1$, respectively. 

In the rest of this subsection, we suppose that $K$ has characteristic $0$.
\begin{notation}
 Let $C(f)$ be the set of all critical points $c$ of $f$ in $\bP^1$,
 i.e., $f'(c)=0$.
%, and for each $n\in\bN$, 
% let $\Fix(f^n)$ be the set of all fixed points of $f^n$ in $\bP^1$.
\end{notation}
For every $n\in\bN$,
$f^n$ has $2d^n-2$ critical points in $\bP^1$ 
if we take into account the multiplicity of each $c\in C(f^n)$.
Let $\SAT(f)$ be the set of all superattracting periodic points of $f$ in $\bP^1$,
i.e., 
\begin{gather*}
 \SAT(f)=\bigcup_{n\in\bN}(\Fix(f^n)\cap C(f^n)). 
\end{gather*}
By $\#C(f)<\infty$ and the chain rule, $\#\SAT(f)<\infty$. 

The chordal derivative $f^\#$ on $\bP^1$ is a function
\begin{gather*}
\bP^1\ni z\mapsto f^\#(z):=\lim_{\bP^1\ni w\to z}[f(w),f(z)]/[w,z].
\end{gather*}
For every non-degenerate homogeneous lift $F$ of $f$,
there exists a sequence $(C_j^F)_{j=1}^{2d-2}$ in $K^2\setminus\{0\}$
such that the Jacobian determinant of $F$ factors as
\begin{gather*}
 \det DF(\cdot)=\prod_{j=1}^{2d-2}(\cdot\wedge C_j^F)\quad\text{on }K^2.
\end{gather*}
Setting $c_j:=\pi(C_j^F)$ $(j=1,2,\ldots,2d-2)$,
the sequence $(c_j)_{j=1}^{2d-2}$ in $\bP^1$
is independent of the choice of the lift $F$ upto permutation, and
satisfies that for every $c\in C(f)$, $\#\{j\in\{1,2,\ldots,2d-2\}:c_j=c\}-1$ equals
the multiplicity of $c$. 
For every $z\in\bP^1$, by a computation involving Euler's identity, we have
\begin{gather*}
 f^\#(z)=\frac{1}{|d|}\frac{\|p\|^2}{\|F(p)\|^2}|\det DF(p)|\quad\text{for }p\in\pi^{-1}(z)
\end{gather*}
(cf.\ \cite[Theorem 4.3]{Jonsson98}), which with \eqref{eq:chordaldist} yields
the equality $\log(f^{\#})=-\log|d|+\sum_{j=1}^{2d-2}(\log[\cdot,c_j]+\log\|C_j^F\|)-2T_F|\bP^1$
on $\bP^1$. The ($\exp$ of the) right hand side extends $f^\#$ to a continuous function
on $\sP^1$ so that
\begin{gather}
 \log(f^{\#})=-\log|d|+\sum_{j=1}^{2d-2}(\log[\cdot,c_j]_{\can}+\log\|C_j^F\|)-2T_F\quad\text{on }\sP^1,\label{eq:derivative}
\end{gather}
where the continuous extension of $f^\#$ is also denoted by the same $f^\#$.
The chain rule for $f^\#$ on $\bP^1$ extends to $\sP^1$.

%The following is essentially shown in \cite[Lemma 3.6]{OkuFekete}. 
For completeness, we include a proof of the following.

\begin{lemma}[{\cite[Lemma 3.6]{OkuFekete}}]\label{th:multiplier}
% For every $w\in\Fix(f)\setminus C(f)$,
On $\sP^1$,
\begin{gather}
 \log(f^\#)=L(f)+\sum_{c\in C(f)}\Phi_{g_f}(\cdot,c)+2g_f\circ f-2g_f.\label{eq:formula}
\end{gather}
Here the sum over $C(f)$ takes into account the multiplicity 
of each $c\in C(f)$. 
\end{lemma}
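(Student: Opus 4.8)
The plan is to rewrite the explicit formula \eqref{eq:derivative} for $\log(f^\#)$ into the shape
\begin{gather*}
 \log(f^\#)=A+\sum_{j=1}^{2d-2}\Phi_{g_f}(\cdot,c_j)+2g_f\circ f-2g_f\quad\text{on }\sP^1
\end{gather*}
for a suitable real constant $A$, and then to identify $A=L(f)$ by integrating this identity against $\mu_f$. Here the sum $\sum_{c\in C(f)}\Phi_{g_f}(\cdot,c)$ in \eqref{eq:formula} is read as the sum over the multiset $(c_j)_{j=1}^{2d-2}$ appearing in \eqref{eq:derivative}.

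First I would fix a non-degenerate homogeneous lift $F$ of $f$ with $g_F=g_f$ on $\sP^1$, available by the energy formula recalled above, and record the functional equation $T_F=d\,g_f-g_f\circ f$ on $\sP^1$. To obtain it, observe from \eqref{eq:descend} that on $K^2\setminus\{0\}$ one has $\log\|F^{n+1}\|=T_{F^n}\circ F+d^n\log\|F\|$ and $\log\|F\|=T_F+d\log\|\cdot\|$, hence $\log\|F^{n+1}\|=T_{F^n}\circ F+d^nT_F+d^{n+1}\log\|\cdot\|$; subtracting $d^{n+1}\log\|\cdot\|$ and descending to $\bP^1$ (then extending continuously to $\sP^1$) gives the cocycle relation $T_{F^{n+1}}=T_{F^n}\circ f+d^nT_F$ on $\sP^1$; dividing by $d^{n+1}$ and letting $n\to\infty$, using the uniform convergence $T_{F^n}/d^n\to g_F=g_f$ on $\sP^1$ and its preservation under composition with the fixed continuous self-map $f$ of $\sP^1$, yields $g_f=(g_f\circ f)/d+T_F/d$, which is the asserted identity.

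Next I would substitute into \eqref{eq:derivative} the two relations $\log[\cdot,c_j]_{\can}=\Phi_{g_f}(\cdot,c_j)+g_f(\cdot)+g_f(c_j)$ (the very definition of $\Phi_{g_f}$) and $-2T_F=-2d\,g_f+2g_f\circ f$. Since the $2d-2$ identical summands $g_f(\cdot)$ contribute $(2d-2)g_f$ and $(2d-2)g_f-2d\,g_f=-2g_f$, this rewrites \eqref{eq:derivative} in exactly the displayed shape with
\begin{gather*}
 A:=-\log|d|+\sum_{j=1}^{2d-2}\bigl(g_f(c_j)+\log\|C_j^F\|\bigr).
\end{gather*}
I would then integrate the identity $\log(f^\#)-\sum_{j=1}^{2d-2}\Phi_{g_f}(\cdot,c_j)-2g_f\circ f+2g_f=A$ against $\mu_f$: the left-hand side integrates to $L(f)$ by \eqref{eq:lyapunov}, because $\int_{\sP^1}\Phi_{g_f}(\cdot,c_j)\rd\mu_f=U_{g_f,\mu_f}(c_j)=0$ (by symmetry of the generalized Hsia kernel together with $U_{g_f,\mu_f}\equiv 0$ on $\sP^1$) and $\int_{\sP^1}(g_f\circ f)\rd\mu_f=\int_{\sP^1}g_f\rd(f_*\mu_f)=\int_{\sP^1}g_f\rd\mu_f$ by $f_*\mu_f=\mu_f$, so the last two terms cancel; the right-hand side integrates to $A$. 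Hence $A=L(f)$, and substituting this back into the rewritten \eqref{eq:derivative} gives \eqref{eq:formula}.

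The termwise integration in the last step is unproblematic, since $\log(f^\#)$ is $\mu_f$-integrable (so that $L(f)\in(-\infty,\infty)$), $T_F$ and $g_f$ are continuous on the compact space $\sP^1$, and each $\int_{\sP^1}\Phi_{g_f}(\cdot,c_j)\rd\mu_f$ is finite. The one point I expect to require real care is the functional equation $T_F=d\,g_f-g_f\circ f$: one must ensure that no additive constant appears, which is why I would derive it through the cocycle relation for the $T_{F^n}$ and the limit $n\to\infty$ rather than by merely comparing Laplacians, since the latter would only determine $T_F-d\,g_f+g_f\circ f$ up to a constant that would then still need to be identified.
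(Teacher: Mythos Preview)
Your proposal is correct and follows essentially the same approach as the paper: rewrite \eqref{eq:derivative} using the definition of $\Phi_{g_f}$ and the functional equation $T_F=d\,g_f-g_f\circ f$, then integrate against $\mu_f$ using $U_{g_f,\mu_f}\equiv 0$ and $f_*\mu_f=\mu_f$ to identify the constant as $L(f)$. The only cosmetic difference is that you derive the functional equation via the cocycle relation $T_{F^{n+1}}=T_{F^n}\circ f+d^nT_F$ and a limit, whereas the paper computes $g_f\circ f(z)-d\cdot g_f(z)$ directly from the defining limit $g_f=\lim_{n\to\infty}T_{F^n}/d^n$; these are the same computation.
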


\begin{proof}
Let us choose a non-degenerate homogeneous lift $F$ of $f$ so that 
$g_F=g_f$, i.e., $g_f=\lim_{n\to\infty}T_{F^n}/d^n$ on $\sP^1$. 

By the definition of $\Phi_{g_f}$, \eqref{eq:derivative} is rewritten as
\begin{gather}
 \log(f^{\#})=-\log|d|+\sum_{j=1}^{2d-2}(\Phi_{g_f}(\cdot,c_j)+g_f(c_j)+\log\|C_j^F\|)
 -2T_F+(2d-2)g_f\tag{\ref{eq:derivative}'}\label{eq:derivativedyn}
\end{gather} 
on $\sP^1$. We claim that $g_f\circ f=d\cdot g_f-T_F$ on $\sP^1$,
which is equivalent to $-2T_F+(2d-2)g_f=2g_f\circ f-2g_f$ on $\sP^1$;
Indeed, for every $z\in\bP^1$, by $g_f=g_F=\lim_{n\to\infty}T_{F^n}/d^n$ on $\sP^1$,
we have
\begin{multline*}
 g_f\circ f(z)-d\cdot g_f(z)
=\lim_{n\to\infty}\left(\frac{1}{d^n}(\log\|F^n(F(p))\|-d^n\log\|F(p)\|)\right)\\
-d\cdot\lim_{n\to\infty}\left(\frac{1}{d^{n+1}}(\log\|F^{n+1}(p)\|-d^{n+1}\log\|p\|)\right)\\
=-(\log\|F(p)\|-d\cdot\log\|p\|)=-T_F(z),
\end{multline*}
where $p\in\pi^{-1}(z)$. Hence $g_f\circ f-d\cdot g_f=-T_F$ on $\bP^1$,
which in turn holds on $\sP^1$ by the continuity of both sides, and the claim holds.

By this claim, \eqref{eq:derivativedyn} is rewritten as
\begin{multline}
 \log(f^{\#})=-\log|d|+\sum_{j=1}^{2d-2}(\Phi_{g_f}(\cdot,c_j)+g_f(c_j)+\log\|C_j^F\|)
 +2g_f\circ f-2g_f\quad\text{on }\sP^1.\label{unintegrated}
\end{multline}
Integrating both sides in \eqref{unintegrated} against $\rd\mu_f$ over $\sP^1$, 
by $U_{g_f,\mu_f}\equiv 0$ and $f_*\mu_f=\mu_f$ on $\sP^1$, we have
\begin{align*}
 &L(f):=\int_{\sP^1}\log(f^{\#})\rd\mu_f\\
=&-\log|d|+\sum_{j=1}^{2d-2}(U_{g_f,\mu_f}(c_j)+g_f(c_j)+\log\|C_j^F\|)
+2\int_{\sP^1}g_f\circ f\rd\mu_f-2\int_{\sP^1}g_f\rd\mu_f\\
=&-\log|d|+\sum_{j=1}^{2d-2}(g_f(c_j)+\log\|C_j^F\|).
\end{align*}
%These equalities 
This with \eqref{unintegrated} completes the proof of Lemma \ref{th:multiplier}.
\end{proof}

\subsection*{Berkovich Julia and Fatou sets $\sJ(f)$ and $\sF(f)$}
The exceptional set of (the extended) $f$ is 
$E(f):=\left\{a\in\bP^1:\#\bigcup_{n\in\bN}f^{-n}(a)<\infty\right\}$,
which agrees with the set of all $a\in\SAT(f)$ such that $\deg_{f^j(a)} f=d$ for any $j\in\bN$.
The {\itshape Berkovich} Julia set of $f$ is
\begin{gather*}
 \sJ(f):=\left\{\cS\in\sP^1:
 \bigcap_{V:\text{ open in }\sP^1\text{ and contains } \cS}\left(\bigcup_{n\in\bN}f^n(V)\right)=\sP^1\setminus E(f)\right\}
\end{gather*}
(cf.\ \cite[Definition 2.8]{FR09}), which is closed in $\sP^1$, and
the {\itshape Berkovich} Fatou set of $f$ is 
$\sF(f):=\sP^1\setminus\sJ(f)$, which is open in $\sP^1$.
For archimedean $K$, these definitions
of $\sJ(f)$ and $\sF(f)$ are equivalent to those 
of the Julia and Fatou sets of $f$
in terms of the non-normality and the normality of $\{f^n:n\in\bN\}$, respectively.

A Berkovich Fatou component of $f$ is a connected component of $\sF(f)$;
if $W$ is a Berkovich Fatou component of $f$, then
so is $f(W)$.
A Berkovich Fatou component $W$ of $f$ is cyclic under $f$ if $f^p(W)=W$ for some $p\in\bN$. 
For archimedean $K$, the classification of cyclic (Berkovich) Fatou components into 
\begin{itemize}
 \item immediate attractive basins of either (super)attracting or parabolic cycles, and
 \item rotation domains, i.e., Siegel disks and Herman rings
\end{itemize}
is essentially due to Fatou (cf.\ \cite[Theorem 5.2]{Milnor3rd}).
For non-archimedean $K$, 
its counterpart due to Rivera-Letelier (see \cite[Proposition 2.16]{FR09} and 
its {\itshape esquisse de d\'emonstration} and also \cite[Remark 7.10]{Benedetto10})
asserts that {\itshape every cyclic Berkovich Fatou component 
$W$ of $f$ is either 
\begin{itemize}
 \item  an immediate attractive basin of $f$ in that
 $W$ contains a $($super$)$attracting fixed point $a$ of $f^p$ in $W\cap\bP^1$ 
 for some $p\in\bN$ and that $\lim_{n\to\infty}(f^p)^n(w)=a$ for any $w\in W$, or
 \item a singular domain of $f$ in that $f^p(W)=W$ and
 that $f^p:W\to W$ is injective for some $p\in\bN$,
\end{itemize}
and moreover, only one of these two possibilities occurs.}

\section{Proof of Theorem \ref{th:lyapunov}}
\label{sec:quantitative}
Let $K$ be an algebraically closed field of characteristic $0$ that is complete with respect to
a non-trivial and possibly non-archimedean absolute value $|\cdot|$. Let $f\in K(z)$ be
a rational function over $K$ of degree $d>1$. 

We note that $[f^n=\Id]/(d^n+1)$ is a probability Radon 
measure on $\sP^1$, and that
%$[f^n=\Id]$
%satisfies 
$\sup_{n\in\bN}([f^n=\Id](\SAT(f)))\le\#\SAT(f)<\infty$ 
for every $n\in\bN$.

The strategy of the proof of Theorem \ref{th:lyapunov}
is to compute the difference
$\int_{\sP^1\setminus\SAT(f)}\log(f^\#)\rd[f^n=\Id]
-([f^n=\Id](\sP^1\setminus\SAT(f))) L(f)$
in two different ways, by integrating the equality \eqref{eq:formula} applied to $f^n$ 
and the \eqref{eq:formula} itself
against $[f^n=\Id]$ over $\sP^1\setminus\SAT(f)$, for each $n\in\bN$.
%using the chain rule for the chordal derivative $f^{\#}$ of $f$.

%In the following Lemma, if $\SAT(f)=\emptyset$, then
%none of 
%the terms \eqref{eq:second}, \eqref{eq:third}, and \eqref{eq:fourth}
%appears.

\begin{lemma}\label{th:lower}
For every $n\in\bN$,
\begin{align}
\notag &\frac{1}{d^n}\int_{\sP^1\setminus\SAT(f)}\log(f^\#)\rd[f^n=\Id]
-\frac{[f^n=\Id](\sP^1\setminus\SAT(f))}{d^n}L(f)\\
=&\frac{1}{nd^n}
\sum_{c\in(C(f^n)\setminus\Fix(f^n))
\cap(\bigcup_{j=0}^{n-1}f^{-j}(C(f)\setminus\SAT(f)))}\Phi_{g_f}(f^n(c),c)\label{eq:first}\\
&+\frac{1}{nd^n}
\sum_{c\in(C(f^n)\setminus\Fix(f^n))
\cap(\bigcup_{j=0}^{n-1}f^{-j}(C(f)\cap\SAT(f)))}
\Phi_{g_f}(f^n(c),c)\label{eq:second}\\
&-\frac{1}{nd^n}\sum_{c\in C(f^n)\setminus\Fix(f^n)}
 \int_{\SAT(f)}\Phi_{g_f}(c,\cdot)\rd[f^n=\Id](\cdot)\label{eq:third}\\
&-\frac{1}{nd^n}\sum_{c\in C(f^n)\cap\Fix(f^n)}\int_{\SAT(f)\setminus\{c\}}\Phi_{g_f}(c,\cdot)\rd[f^n=\Id](\cdot)\label{eq:fourth}\\
\notag &-\frac{2-2d^{-n}}{n}\int_{\sP^1}\Phi(f^n,\Id)_{g_f}\rd\mu_f.
\end{align}
Here the sums over subsets in $C(f^n)$
take into account the multiplicity of each $c$ as a critical point of $f^n$.
%the sums over $C(f)\setminus\SAT(f),C(f)\cap\SAT(f),f^{-j}(c')\setminus\Fix(f^n)$
%for each $c'\in C(f)$, $C(f^n)\setminus\Fix(f^n)$, and $C(f^n)\cap\Fix(f^n)$ 
%take into account the multiplicity of each $c'\in C(f)\setminus\SAT(f)$,
%each $c'\in C(f)\cap\SAT(f)$, each $w\in f^{-j}(c')\setminus\Fix(f^n)$ for each $c'\in C(f)$,
%each $c\in C(f^n)\setminus\Fix(f^n)$, and each $c\in C(f^n)\cap\Fix(f^n)$,
%respectively. 
\end{lemma}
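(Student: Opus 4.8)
The plan is to start from the key identity \eqref{eq:formula} of Lemma \ref{th:multiplier}, apply it once to $f^n$ in place of $f$ (using that $\mu_{f^n}=\mu_f$, $g_{f^n}=g_f$, and $L(f^n)=nL(f)$ by the chain rule for $f^\#$) to obtain
\[
 \log((f^n)^\#)=nL(f)+\sum_{c\in C(f^n)}\Phi_{g_f}(\cdot,c)+2g_f\circ f^n-2g_f\quad\text{on }\sP^1,
\]
and then integrate both sides against the measure $[f^n=\Id]$ restricted to $\sP^1\setminus\SAT(f)$. On the left-hand side one recognizes that for a fixed point $w$ of $f^n$ outside $\SAT(f)$ one has $\log((f^n)^\#)(w)=\log|(f^n)'(w)|$, but the right-hand side also contains the terms $2g_f\circ f^n-2g_f$; these are handled via the proximity-function identity: since $f^n(w)=w$ exactly on the support of $[f^n=\Id]$, the combination $2g_f\circ f^n-2g_f$ together with $\log[f^n,\Id]_{\can}$ reassembles into $2\,\Phi(f^n,\Id)_{g_f}$ up to the boundary contribution. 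Then I would invoke \eqref{eq:Riesz} to rewrite $\int_{\sP^1}\Phi(f^n,\Id)_{g_f}\,\rd\mu_f$ as the indicated correction term.

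Next I would divide by $nd^n$ and massage the critical-point sum. The set $C(f^n)$ decomposes, via the chain rule $\big(f^n\big)'=\prod_{j=0}^{n-1}f'\circ f^j$, into the preimages $\bigcup_{j=0}^{n-1}f^{-j}(C(f))$ (counted with multiplicity), so the sum $\sum_{c\in C(f^n)}\Phi_{g_f}(f^n(c),c)$ splits according to whether the underlying critical point of $f$ lies in $\SAT(f)$ or not, yielding the two families \eqref{eq:first} and \eqref{eq:second}; separating further the $c\in\Fix(f^n)$ from $c\notin\Fix(f^n)$ is needed because $\Phi_{g_f}(c,c)$ (equivalently $\log[c,c]_{\can}$) is not finite, which is why the diagonal is excised and the compensating integrals over $\SAT(f)$ (and over $\SAT(f)\setminus\{c\}$ in the diagonal case) appear as \eqref{eq:third} and \eqref{eq:fourth}. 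Concretely, I would move the $\SAT(f)$-portion of the domain of integration of the $[f^n=\Id]$-integral to the right-hand side: integrating $\sum_{c\in C(f^n)}\Phi_{g_f}(\cdot,c)$ against $[f^n=\Id]$ over $\sP^1$ and subtracting the over-counted part supported on $\SAT(f)$ gives exactly the terms with the integrals $\int_{\SAT(f)}\Phi_{g_f}(c,\cdot)\,\rd[f^n=\Id]$, with the diagonal term $c\in\SAT(f)\cap\Fix(f^n)$ requiring the puncture $\SAT(f)\setminus\{c\}$.

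I expect the main obstacle to be bookkeeping rather than any deep inequality: one must keep careful track of (i) multiplicities, both the $2d^n-2$ critical points of $f^n$ counted with multiplicity and the multiplicities of fixed points in $[f^n=\Id]$; (ii) the symmetry $\Phi_{g_f}(\cS,\cS')=\Phi_{g_f}(\cS',\cS)$ and which argument is being integrated in each of \eqref{eq:first}–\eqref{eq:fourth}; and (iii) the finiteness of all the quantities involved, which relies on $\#\SAT(f)<\infty$, on $\#C(f)<\infty$, and on the fact that $\Phi_{g_f}$ is bounded above (upper semicontinuity on the compact $\sP^1\times\sP^1$) so that only the diagonal singularities, carefully excised above, can cause trouble. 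Once the left-hand side is identified as $d^{-n}\int_{\sP^1\setminus\SAT(f)}\log(f^\#)\,\rd[f^n=\Id]-d^{-n}([f^n=\Id](\sP^1\setminus\SAT(f)))L(f)$ — here using $\log((f^n)^\#)=\sum_{j=0}^{n-1}\log(f^\#)\circ f^j$ and that each $f^j$ permutes $\Fix(f^n)$, so that summing over the cycle and dividing by $n$ replaces $\log((f^n)^\#)$ by $\log(f^\#)$ under the integral against $[f^n=\Id]$ — and the right-hand side has been organized into the five displayed pieces, the identity \eqref{eq:first}–\eqref{eq:fourth} follows by collecting terms. The only subtlety worth flagging explicitly in the write-up is the justification that the various point-mass integrals against $[f^n=\Id]$ over $\SAT(f)$ are finite, which is immediate from $[f^n=\Id](\SAT(f))\le\#\SAT(f)<\infty$ together with the local integrability (indeed boundedness away from the diagonal) of $\Phi_{g_f}$.
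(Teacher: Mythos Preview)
Your outline follows essentially the same route as the paper: apply \eqref{eq:formula} to $f^n$, integrate against $[f^n=\Id]$ restricted to $\sP^1\setminus\SAT(f)$, invoke \eqref{eq:Riesz}, decompose $C(f^n)=\bigcup_{j=0}^{n-1}f^{-j}(C(f))$, and use the chain rule to pass from $\log((f^n)^\#)$ to $\log(f^\#)$ on the left-hand side. Two points deserve correction.

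First, a minor one: the passage claiming that ``$2g_f\circ f^n-2g_f$ together with $\log[f^n,\Id]_{\can}$ reassembles into $2\,\Phi(f^n,\Id)_{g_f}$'' is muddled. No such reassembly occurs; the term $2g_f\circ f^n-2g_f$ simply vanishes when integrated against $[f^n=\Id]$, because $f^n(w)=w$ on the support. The constant $\int_{\sP^1}\Phi(f^n,\Id)_{g_f}\,\rd\mu_f$ enters only through \eqref{eq:Riesz}, which rewrites each $U_{g_f,[f^n=\Id]}(c)$ as $\Phi_{g_f}(f^n(c),c)-\int_{\sP^1}\Phi(f^n,\Id)_{g_f}\,\rd\mu_f$; summing over the $2d^n-2$ critical points and dividing by $nd^n$ yields the coefficient $(2-2d^{-n})/n$.

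Second, and this is a genuine gap: your treatment of the diagonal case $c\in C(f^n)\cap\Fix(f^n)$ is not yet a proof. You propose to integrate $\Phi_{g_f}(\cdot,c)$ against $[f^n=\Id]$ over all of $\sP^1$ and then subtract the $\SAT(f)$-portion, puncturing at $c$. But for such $c$ both $\int_{\sP^1}\Phi_{g_f}(\cdot,c)\,\rd[f^n=\Id]$ and $\int_{\SAT(f)}\Phi_{g_f}(\cdot,c)\,\rd[f^n=\Id]$ equal $-\infty$, so the subtraction is meaningless and the ``puncture'' is unjustified. The paper avoids this by evaluating the \emph{finite} quantity $U_{g_f,[f^n=\Id]|(\sP^1\setminus\SAT(f))}(c)$ directly as a limit $\lim_{\bP^1\ni z\to c}$, and then computing explicitly
\[
 \lim_{\bP^1\ni z\to c}\bigl(\Phi_{g_f}(f^n(z),z)-\Phi_{g_f}(z,c)\bigr)
 =\lim_{z\to c}\log\frac{[f^n(z),z]}{[z,c]}
 =\log|(f^n)'(c)-1|=\log|0-1|=0,
\]
using that a superattracting fixed point $c$ of $f^n$ is a \emph{simple} root of $f^n=\Id$ (since $(f^n)'(c)=0\neq 1$). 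This computation is what makes the $\Phi_{g_f}(f^n(c),c)$ term disappear for $c\in\Fix(f^n)$ in the final formula; your sketch asserts the outcome but omits the mechanism.
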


\begin{proof}
%For every $n\in\bN$ and every $w\in\Fix(f^n)\setminus C(f^n)$, 
%by \eqref{eq:formula} applied to $f^n$ and also by $g_{f^n}=g_f$ on $\sP^1$, we have
%\begin{gather}
% \log(f^n)^\#(w)=L(f^n)+\sum_{c\in C(f^n)}\Phi_{g_f}(w,c).\tag{\ref{eq:formula}'}\label{eq:formulaiterated}
%\end{gather}
Integrating both sides in \eqref{eq:formula} against $\rd[f=\Id]$
over $\bP^1\setminus\SAT(f)$,
since $f_*[f=\Id]=[f=\Id]$ on $\sP^1\setminus\SAT(f)$, 
%and dividing these integrations by $nd^n$, 
we have
\begin{multline}
\int_{\sP^1\setminus\SAT(f)}\log(f)^\#\rd[f=\Id]
-([f=\Id](\sP^1\setminus\SAT(f)))\cdot L(f)\\
=\sum_{c\in C(f)}U_{g_f,[f=\Id]|(\sP^1\setminus\SAT(f))}(c).\label{eq:approxiterated}
\end{multline}
We claim that for every $c\in C(f)$, 
\begin{multline}
U_{g_f,[f=\Id]|(\sP^1\setminus\SAT(f))}(c)
+\int_{\sP^1}\Phi(f,\Id)_{g_f}\rd\mu_f\\
=
\begin{cases}
 \Phi_{g_f}(f(c),c)
-\int_{\SAT(f)}\Phi_{g_f}(c,\cdot)\rd[f=\Id](\cdot) &
 \text{if }c\not\in\Fix(f),\\
\hspace*{30pt}-\int_{\SAT(f)\setminus\{c\}}\Phi_{g_f}(c,\cdot)\rd[f=\Id](\cdot)
& \text{if }c\in\Fix(f);
\end{cases}\label{eq:potentialresidual}
%-\int_{\sP^1}\Phi(f,\Id)_{g_f}\rd\mu_f
\end{multline}
Indeed, 
using \eqref{eq:Riesz} and $\Phi(f,\Id)_{g_f}=\Phi_{g_f}(f,\Id)$ on $\bP^1$, 
we have
\begin{multline*}
U_{g_f,[f=\Id]|(\sP^1\setminus\SAT(f))}(c)
=\lim_{\bP^1\ni z\to c}U_{g_f,[f=\Id]|(\sP^1\setminus\SAT(f))}(z)\\
=\lim_{\bP^1\ni z\to c}\left(\Phi_{g_f}(f(z),z)-\int_{\SAT(f)}\Phi_{g_f}(z,\cdot)\rd[f=\Id](\cdot)\right)
-\int_{\sP^1}\Phi(f,\Id)_{g_f}\rd\mu_f,
\end{multline*}
and moreover,
\begin{multline*}
\lim_{\bP^1\ni z\to c}\left(\Phi_{g_f}(f(z),z)-\int_{\SAT(f)}\Phi_{g_f}(z,\cdot)\rd[f=\Id](\cdot)\right)\\
=
\begin{cases}
\Phi_{g_f}(f(c),c)-\int_{\SAT(f)}\Phi_{g_f}(c,\cdot)\rd[f=\Id](\cdot) &
 \text{if }c\not\in\Fix(f),\\
\lim_{\bP^1\ni z\to c}(\Phi_{g_f}(f(z),z)-\Phi_{g_f}(z,c)) & \\
\hspace*{30pt}-\int_{\SAT(f)\setminus\{c\}}\Phi_{g_f}(c,\cdot)\rd[f=\Id](\cdot) & \text{if }c\in\Fix(f).
\end{cases}
\end{multline*}
In the latter case that $c\in C(f)\cap\Fix(f)$, the first term 
in the right hand side is computed as
\begin{multline*}
 \lim_{\bP^1\ni z\to c}(\Phi_{g_f}(f(z),z)-\Phi_{g_f}(z,c))=\lim_{\bP^1\ni z\to c}\log\frac{[f(z),z]}{[z,c]}\\
=\lim_{\bP^1\ni z\to c}\log\frac{|f(z)-f(c)+c-z|}{|z-c|}=\log|f'(c)-1|=\log|0-1|=0,
\end{multline*}
where we can assume $c\neq\infty$ by the coordinate change $w\mapsto 1/w$ when $c=\infty$. Hence the claim holds. 

For every $n\in\bN$,
from \eqref{eq:approxiterated} and \eqref{eq:potentialresidual}
applied to $f^n$,
we have
\begin{multline*}
\int_{\sP^1\setminus\SAT(f)}\log(f^n)^\#\rd[f^n=\Id]
-([f^n=\Id](\sP^1\setminus\SAT(f)))\cdot L(f^n)\\
=\sum_{c\in C(f^n)\setminus\Fix(f^n)}\Phi_{g_f}(f^n(c),c)
-\sum_{c\in C(f^n)\setminus\Fix(f^n)}
\int_{\SAT(f)}\Phi_{g_f}(c,\cdot)\rd[f^n=\Id](\cdot)\\
-\sum_{c\in C(f^n)\cap\Fix(f^n)}\int_{\SAT(f)\setminus\{c\}}\Phi_{g_f}(c,\cdot)\rd[f^n=\Id](\cdot)-(2d^n-2)\int_{\sP^1}\Phi(f^n,\Id)_{g_f}\rd\mu_f,
\end{multline*}
where $g_{f^n}=g_f$, $\mu_{f^n}=\mu_f$, and $\SAT(f^n)=\SAT(f)\cap\Fix(f^n)$,
and by the chain rule (and $f_*[f^n=\Id]=[f^n=\Id]$ on $\sP^1\setminus\SAT(f)$ and
$f_*\mu_f=\mu_f$ on $\sP^1$, recalling 
also the definition \eqref{eq:lyapunov} of $L(f)$), we have
\begin{multline*}
\int_{\sP^1\setminus\SAT(f)}\log(f^n)^\#\rd[f^n=\Id]
-([f^n=\Id](\sP^1\setminus\SAT(f)))\cdot L(f^n)\\
=n\left(
\int_{\sP^1\setminus\SAT(f)}\log(f^\#)\rd[f^n=\Id]
-([f^n=\Id](\sP^1\setminus\SAT(f)))\cdot L(f)\right).
\end{multline*}
%(recall also the definition \eqref{eq:lyapunov} of $L(f)$).
Now the proof of Lemma \ref{th:lower} is complete
since $C(f^n)=\bigcup_{j=0}^{n-1}f^{-j}(C(f))$
and $C(f)=(C(f)\setminus\SAT(f))\cup(C(f)\cap\SAT(f))$.
%\begin{multline*}
% \sum_{c\in C(f^n)\setminus\Fix(f^n)}\Phi_{g_f}(f^n(c),c)\\
% =
%\left(\sum_{c'\in C(f)\setminus\SAT(f)}
%%\sum_{j=0}^{n-1}\sum_{w\in f^{-j}(c')\setminus\Fix(f^n)}\Phi_{g_f}(f^n(w),w)\\
%+\sum_{c'\in C(f)\cap\SAT(f)}\right)\sum_{j=0}^{n-1}\sum_{w\in f^{-j}(c')\setminus\Fix(f^n)}\Phi_{g_f}(f^n(w),w).
%\end{multline*}
\end{proof}

Let us estimate the terms \eqref{eq:first}, \eqref{eq:second}, \eqref{eq:third}, 
and \eqref{eq:fourth}.

\begin{lemma}\label{th:recurrence}
\begin{gather*}
\frac{1}{nd^n}
\sum_{c\in(C(f^n)\setminus\Fix(f^n))
\cap(\bigcup_{j=0}^{n-1}f^{-j}(C(f)\setminus\SAT(f)))}\Phi_{g_f}(f^n(c),c)
%\frac{1}{nd^n}\sum_{c'\in C(f)\setminus\SAT(f)}
%\sum_{w\in\bigcup_{j=0}^{n-1} f^{-j}(c')\setminus\Fix(f^n)}\Phi_{g_f}(f^n(w),w)
=O(1)\quad\text{as }n\to\infty.
\end{gather*} 
Here, for every $n\in\bN$, the sum takes into account the multiplicity of each $c$
as a critical point of $f^n$.
\end{lemma}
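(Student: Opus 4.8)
The plan is to bound $\Phi_{g_f}(f^n(c),c)$ uniformly in $n$ and $c$ over the relevant index set, so that the average $\tfrac{1}{nd^n}\sum$ is $O(1)$ simply because the number of terms is at most $2d^n-2$. The key observation is that the summation runs only over $c\in C(f^n)\setminus\Fix(f^n)$ lying in $\bigcup_{j=0}^{n-1}f^{-j}(C(f)\setminus\SAT(f))$, i.e.\ over critical points $c$ of $f^n$ which are \emph{not} fixed by $f^n$ and whose forward orbit hits a non-superattracting critical point of $f$. Writing $c$ as the $j$-th preimage branch of some $\gamma\in C(f)\setminus\SAT(f)$, one has $f^n(c)=f^{n-j}(\gamma)$; the point is to control $[f^n(c),c]_{\can}$ from below (so $\log[f^n(c),c]_{\can}$ from above this is automatic since $[\cdot,\cdot]_{\can}\le 1$) and from above (so $\log[f^n(c),c]_{\can}$ is bounded below), together with the boundedness of $g_f$ on $\sP^1$.

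First I would recall that $g_f$ is continuous on the compact space $\sP^1$, hence bounded, so the two terms $-g_f\circ f^n(c)-g_f(c)$ in $\Phi_{g_f}(f^n(c),c)=\log[f^n,\Id]_{\can}(c)-g_f(f^n(c))-g_f(c)$ contribute $O(1)$ uniformly. Next I would isolate the term $\log[f^n,\Id]_{\can}(c)=\log[f^n(c),c]_{\can}$. The upper bound $\log[f^n(c),c]_{\can}\le 0$ is free. For the lower bound, the crucial input is that $c\notin\Fix(f^n)$, so $f^n(c)\neq c$ in $\bP^1$ and the Hsia kernel does not vanish there; but one needs this to be quantitatively bounded away from zero, uniformly in $n$. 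Here I would use that $\gamma\in C(f)\setminus\SAT(f)$ means $\gamma$ is \emph{not} a superattracting periodic point, so its forward orbit $\{f^{n-j}(\gamma)\}$ stays in a region where the relevant distances are controlled — concretely, one shows that $\operatorname{dist}$ from $f^n(c)$ to $c$ is bounded below by a function of the (finitely many, by $\#\SAT(f)<\infty$ and the structure of Fatou components recalled at the end of Section \ref{sec:background}) attracting cycles, exploiting that a non-superattracting critical point cannot be swallowed arbitrarily fast. An alternative and perhaps cleaner route: since $[f^n,\Id]_{\can}$ is a continuous (hence bounded below by a positive constant, away from its zero set) function on $\sP^1$, and since on the subset of critical points under consideration one has a definite gap $[f^n(c),c]_{\can}\ge \varepsilon_0>0$ coming from the non-superattracting hypothesis, the sum of $2d^n-2$ such terms divided by $nd^n$ is $O(1)$.

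The main obstacle I anticipate is precisely making the lower bound $[f^n(c),c]_{\can}\ge\varepsilon_0$ \emph{uniform in $n$}: a priori, as $n$ grows, the $j$-th preimage $c$ of a fixed $\gamma$ could approach a point where $f^n(c)$ is very close to $c$, making $\log[f^n(c),c]_{\can}$ large and negative, which would spoil the $O(1)$ bound if it happened for a positive proportion of the $c$'s. To rule this out I would argue that the only mechanism producing such collapse is the presence of attracting (or parabolic) behavior, i.e.\ the forward orbit of $c$ being drawn into an immediate attractive basin, and that this forces $\gamma\in\SAT(f)$ or at least forces $c$ itself (or an iterate) to lie in a basin whose combinatorics is bounded by $\#\SAT(f)$ and $\#C(f)$; since $\gamma\in C(f)\setminus\SAT(f)$ is explicitly excluded, one gets the uniform gap. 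Cataloguing the possible Fatou components of $f$ (immediate attractive basins versus rotation domains / singular domains, as recalled in Section \ref{sec:background}) and checking that a non-superattracting critical orbit contributes only boundedly to the total is the technical heart; once that is in place, $\tfrac{1}{nd^n}\cdot(2d^n-2)\cdot\bigl(O(1)+O(\log(1/\varepsilon_0))\bigr)=O(1/n)=O(1)$, which is more than enough.
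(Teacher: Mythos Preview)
Your plan has a genuine gap: the uniform lower bound $[f^n(c),c]_{\can}\ge\varepsilon_0>0$ that you aim for simply does not exist. A critical point $\gamma\in C(f)\setminus\SAT(f)$ can perfectly well lie in the Julia set $\sJ(f)$ --- think of a Misiurewicz parameter, or any map with a recurrent Julia critical point --- and then $[f^n(\gamma),\gamma]$ is \emph{not} bounded away from zero as $n\to\infty$. Your proposed mechanism (``the only way $f^n(c)$ gets close to $c$ is through an attractive basin, hence $\gamma\in\SAT(f)$'') is false: recurrence in the Julia set is another mechanism, and your Fatou-component classification says nothing about it. Even in the Fatou case your argument only controls $[f^n(\gamma),\gamma]$ for the \emph{base} critical point $\gamma$, not $[f^n(c),c]$ for an arbitrary preimage $c\in f^{-j}(\gamma)$; there is no reason those preimages inherit a uniform gap.

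The paper's proof fixes both issues. First it reduces from $c$ to $\gamma$ by a Lipschitz estimate: if $f$ is $L$-Lipschitz for the chordal metric, then for $c\in f^{-j}(\gamma)$ one has $L^n[f^n(c),c]\ge L^j[f^n(c),c]\ge[f^n(\gamma),\gamma]$, so $\log[f^n(c),c]\ge\log[f^n(\gamma),\gamma]-n\log L$. Second, it splits according to whether $\gamma\in\sF(f)$ or $\gamma\in\sJ(f)$. The Fatou case gives $\log[f^n(\gamma),\gamma]=O(1)$ by the classification argument you sketched. The Julia case is handled by Przytycki's lemma, which yields only $[f^n(\gamma),\gamma]\ge 1/(20L^n)$, i.e.\ $\log[f^n(\gamma),\gamma]=O(n)$, \emph{not} $O(1)$. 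This $O(n)$ is exactly what the extra factor $1/n$ in the average is there to absorb: one gets $\tfrac{1}{nd^n}\sum_{c'}\sum_{j=0}^{n-1}d^j\cdot O(n)=O(1)$. So the missing ingredients are the Lipschitz pull-back and Przytycki's lemma; without them the argument does not close.
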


\begin{proof}
For every $n\in\bN$, by the definition of $\Phi_{g_f}$,
%and $C(f^n)=\bigcup_{j=0}^{n-1}f^{-j}(C(f))$, 
we have
\begin{align*}
&\frac{2d^n-2}{nd^n}\cdot 2\sup_{\sP^1}|g_f|\\
\ge&
\frac{1}{nd^n}
\sum_{c\in(C(f^n)\setminus\Fix(f^n))
\cap(\bigcup_{j=0}^{n-1}f^{-j}(C(f)\setminus\SAT(f)))}\Phi_{g_f}(f^n(c),c)\\
%\frac{1}{nd^n}\sum_{c'\in C(f)\setminus\SAT(f)}
%\sum_{w\in\bigcup_{j=0}^{n-1} f^{-j}(c')\setminus\Fix(f^n)}\Phi_{g_f}(f^n(w),w)\\
\ge&
\frac{1}{nd^n}
\sum_{c'\in C(f)\setminus\SAT(f)}\sum_{j=0}^{n-1}
\sum_{w\in f^{-j}(c')}
%\sum_{c\in(C(f^n)\setminus\Fix(f^n))
%\cap(\bigcup_{j=0}^{n-1}f^{-j}(C(f)\setminus\SAT(f)))}
\log[f^n(w),w]-\frac{2d^n-2}{nd^n}\cdot 2\sup_{\sP^1}|g_f|,
\end{align*}
where the sums take into account the appropriate multiplicities
of $c,c',$ and $w$.

We can fix $L>1$ such that $f:\bP^1\to\bP^1$ is $L$-Lipschitz continuous
with respect to the normalized chordal metric
(for non-archimedean $K$, see, e.g., \cite[Theorem 2.14]{SilvermanDynamics}). 
%\cite[Proposition 9.37]{BR10}
Then for every $c'\in C(f)\setminus\SAT(f)$, 
every $j\in\{0,1,2,\ldots,n-1\}$, and every $w\in f^{-j}(c')$,
$L^n[f^n(w),w]\ge L^j[f^n(w),w]\ge[f^j(f^n(w)),f^j(w)]=[f^n(c'),c']$,
so that
\begin{gather*}
 \log[f^n(w),w]\ge\log[f^n(c'),c']-n\log L.
\end{gather*}
Recall the definition of the Berkovich Julia and Fatou sets $\sJ(f)$ and $\sF(f)$ 
in Section \ref{sec:background}.
We claim that for every $c'\in C(f)\setminus\SAT(f)$,
\begin{gather}
(0\ge)\log[f^n(c'),c']
\ge\begin{cases}
  O(1) & \text{if }c'\in\sF(f)\\
  O(n) & \text{if }c'\in\sJ(f)
 \end{cases}
% =O(n)
\quad\text{as }n\to\infty;\label{eq:recurrence}
\end{gather}
indeed, in the former case,
if $\liminf_{n\to\infty}[f^n(c'),c']=0$ for some $c'\in (C(f)\cap\sF(f))\setminus\SAT(f)$,
then the Berkovich Fatou component $U$ of $f$
containing $c'$ is cyclic under $f$, i.e.,
$f^p(U)=U$ for some $p\in\bN$. Since $c'\in C(f)\cap U$, $f^p:U\to U$ is not injective, 
and by the classification of cyclic Berkovich Fatou components of $f$ 
(see Section \ref{sec:background}),
$U$ is an immediate attracting basin of an either (super)attracting
or parabolic cycle of $f$ in $\bP^1$. Then since $c'\not\in\SAT(f)$, 
$\liminf_{n\to\infty}[f^n(c'),c']>0$, which is a contradiction. On the other hand,
in the latter case, by (the proof of) Przytycki's lemma \cite[Lemma 1]{Przytycki93},
it holds that 
{\itshape for every $c'\in C(f)\cap\sJ(f)$ and every $n\in\bN$, $[f^n(c'),c']\ge 1/(20L^n)$}.

Hence the claim holds. 
%Since $\#C(f)<\infty$, 
Now we have
\begin{multline*}
 \frac{1}{nd^n}
\sum_{c'\in C(f)\setminus\SAT(f)}\sum_{j=0}^{n-1}
\sum_{w\in f^{-j}(c')}
\log[f^n(w),w]\\
%\ge\frac{1}{nd^n}\sum_{c'\in C(f)\setminus\SAT(f)}\sum_{j=0}^{n-1}d^j(\log[f^n(c'),c']-j\log L)\\
\ge\frac{1}{nd^n}\sum_{c'\in C(f)\setminus\SAT(f)}\sum_{j=0}^{n-1}d^j(\log[f^n(c'),c']-n\log L)\\
\ge\frac{2d-2}{nd^n}\cdot O(n)\cdot\sum_{j=0}^{n-1}d^j=O(1)\quad\text{as }n\to\infty,
\end{multline*}
%as $n\to\infty$, 
and the proof of Lemma \ref{th:recurrence} is complete.
\end{proof}

%For each point $w\in\bP^1$ and each non-empty subset $S$ in $\bP^1$, 
%set $[w,S]:=\inf_{z\in S}[w,S]$. 
The following technical and elementary
lemma is useful.

\begin{lemma}\label{th:distance}
There exists $\delta>0$ such that
for every $a\in\SAT(f)$ and every $c\in C(f)$ 
satisfying $\bigcup_{j\in\bN\cup\{0\}}f^{-j}(c)\neq\{a\}$,
\begin{gather}
 \inf\left\{[a,w]:w\in\left(\bigcup_{j\in\bN\cup\{0\}}f^{-j}(c)\right)\setminus\{a\}\right\}\ge\delta.\label{eq:distance} 
\end{gather}
Moreover, if $a\in\SAT(f)$ and $c\in C(f)$ 
satisfy $\bigcup_{j\in\bN\cup\{0\}}f^{-j}(c)=\{a\}$, then 
$a\in\Fix(f)$. In particular, $\bigcup_{j\in\bN\cup\{0\}}f^{-j}(c)\subset\Fix(f)$.
\end{lemma}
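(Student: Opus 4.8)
The plan is to prove the two assertions of Lemma~\ref{th:distance} separately, starting with the second (structural) one, since the first (quantitative) one will follow from it together with a compactness argument. For the structural statement, suppose $a\in\SAT(f)$ and $c\in C(f)$ satisfy $\bigcup_{j\in\bN\cup\{0\}}f^{-j}(c)=\{a\}$. Since $a\in\SAT(f)$, there is a minimal $p\in\bN$ with $f^p(a)=a$ and $(f^p)'(a)=0$; by the chain rule there is some $j_0\in\{0,1,\dots,p-1\}$ with $f^{j_0}(a)\in C(f)$. I would argue that the grand orbit $\bigcup_{j\ge 0}f^{-j}(c)$ being the singleton $\{a\}$ forces $c=f^{j}(a)$ for \emph{every} $j\ge 0$ for which $f^{j}(a)$ lies in the grand orbit, and in particular, chasing the condition $f^{-1}(c)\subseteq\{a\}$ (so that the only preimage of $c$ is $a$, which in turn can only happen if $f(a)$ relates to $c$), one gets $f(a)=a$; more carefully, $a\in f^{-1}(c)\subseteq\{a\}$ is automatic only if $c=f(a)$, and then $c=f(a)$ lies in the grand orbit of $c$, so $c\in\{a\}$, i.e.\ $c=a$, whence $f(a)=c=a$. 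The final "in particular" clause is then immediate: if some $w\in\bigcup_{j\ge 0}f^{-j}(c)$ were not in $\Fix(f)$ we would be in the case $\bigcup_{j\ge 0}f^{-j}(c)\neq\{a\}$ for the relevant $a$, but when instead the grand orbit is $\{a\}$ we have just shown $a\in\Fix(f)$, so $\bigcup_{j\ge 0}f^{-j}(c)=\{a\}\subseteq\Fix(f)$.

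For the quantitative statement \eqref{eq:distance}, I would proceed by contradiction using the finiteness of the relevant index set. Recall $\#\SAT(f)<\infty$ and $\#C(f)<\infty$, so there are only finitely many pairs $(a,c)\in\SAT(f)\times C(f)$, and for each such pair with $\bigcup_{j\ge 0}f^{-j}(c)\neq\{a\}$ the set $\bigl(\bigcup_{j\ge 0}f^{-j}(c)\bigr)\setminus\{a\}$ is nonempty. If \eqref{eq:distance} failed for some pair $(a,c)$, there would be a sequence $(w_k)$ in $\bigl(\bigcup_{j\ge 0}f^{-j}(c)\bigr)\setminus\{a\}$ with $[a,w_k]\to 0$, i.e.\ $w_k\to a$ in the chordal metric on $\bP^1$. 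The key point is then that each $w_k$ satisfies $f^{m_k}(w_k)=c$ for some $m_k\in\bN\cup\{0\}$, while $f^{m}(a)$ runs through the finite periodic orbit of $a$; since $f$ is continuous (indeed $L$-Lipschitz) and $w_k\to a$, one shows that for $k$ large $f^{m_k}(w_k)$ must be close to a point of the (finite, hence discrete) forward orbit of $a$, and that this closeness, combined with $c$ being a fixed target, forces $c$ to lie in the forward orbit of $a$. But if $c\in\bigcup_{j\ge 0}f^{j}(a)$, then since $a\in\SAT(f)$ the whole backward-and-forward grand orbit collapses—using the superattracting structure and the second part of the lemma just proved—yielding $\bigcup_{j\ge 0}f^{-j}(c)=\{a\}$ (after possibly relabeling $a$ within its cycle), contradicting the hypothesis on the pair $(a,c)$. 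Taking $\delta>0$ to be the minimum over the finitely many admissible pairs of the (positive) infima in \eqref{eq:distance} completes the argument.

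I expect the main obstacle to be the contradiction step in the quantitative part: carefully ruling out the possibility that preimages $w_k$ of $c$ accumulate at a superattracting periodic point $a$ \emph{without} $c$ itself being trapped in the finite orbit of $a$. The subtlety is that a priori the integers $m_k$ (the iterate numbers with $f^{m_k}(w_k)=c$) are unbounded, so one cannot directly pass to a limit $f^{m}(a)=c$; instead I would invoke that near a superattracting cycle the dynamics is locally a ramified covering onto a small disk contracting toward the cycle, so that any point mapping into $c$ and lying very close to $a$ must in fact have all its forward iterates (until hitting $c$) stay inside the immediate attracting basin, and a point genuinely close to $a$ can map to $c$ only in a controlled number of steps unless $c$ already lies on the critical orbit of the cycle. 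This local-dynamics input is exactly the analogue, at a superattracting cycle, of the argument used in Lemma~\ref{th:recurrence} via the classification of cyclic Berkovich Fatou components, and I would phrase it uniformly for archimedean and non-archimedean $K$ using that classification together with the $L$-Lipschitz bound on $f$.
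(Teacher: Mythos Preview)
Your argument for the second assertion is essentially correct (once streamlined: $j=0$ gives $c=a$, and surjectivity of $f$ together with $f^{-1}(c)\subset\{a\}$ gives $f(a)=c=a$). The problem is in the first assertion. Even if you succeed in showing that $c$ lies on the forward orbit $\mathcal{O}_a=\{a,f(a),\ldots,f^{p-1}(a)\}$, your claimed contradiction does \emph{not} follow: $c\in\mathcal{O}_a$ in no way implies $\bigcup_{j\ge 0}f^{-j}(c)=\{a\}$. A point on a superattracting cycle typically has many preimages off the cycle, and the second part of the lemma gives only the implication in the opposite direction. So the contradiction you reach is with the wrong hypothesis, and the argument breaks at exactly the step you flag as the ``main obstacle''---your proposed fix via local dynamics does not repair it, because the target of the contradiction is misidentified.

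The paper's proof avoids this entirely by a much more elementary construction. Fix $a\in\SAT(f)$ with exact period $p$, take a small neighborhood $U$ of $a$ with $f^p(U)\subset U$ and with the $f^\ell(U)$ ($0\le\ell<p$) disjoint, and set $\mathcal{U}=\bigcup_{\ell=0}^{p-1}f^\ell(U)$. By finiteness of $C(f)$ and of $f^{-1}(\mathcal{O}_a)$, shrink $U$ so that $\mathcal{U}\cap C(f)\subset\mathcal{O}_a$ and $f^{-1}(\mathcal{O}_a)\cap\mathcal{U}\subset\mathcal{O}_a$. Now if some preimage $w$ of $c$ lies in $U\setminus\{a\}$, forward invariance of $\mathcal{U}$ forces $c\in\mathcal{U}\cap C(f)\subset\mathcal{O}_a$, and then iterating $f^{-1}(\mathcal{O}_a)\cap\mathcal{U}\subset\mathcal{O}_a$ backward along the chain from $c$ to $w$ shows $w\in\mathcal{O}_a\cap U=\{a\}$, contradicting $w\neq a$. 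The correct contradiction is thus $w=a$, not $\bigcup_{j\ge 0}f^{-j}(c)=\{a\}$; no Lipschitz estimates, sequences, or Fatou-component classification are needed---only finiteness and continuity.
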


\begin{proof}
 Let $a\in\SAT(f)$. Then there is $p\in\bN$ such that $f^p(a)=a$ and
 $f^j(a)\neq a$ for every $j\in\{1,2,\ldots,p-1\}$. 
 We can fix an open neighborhood $U$ of $a$ in $\bP^1$ so small that
 $f^p(U)\subset U$ by the Taylor expansion of $f^p$ at $a$ and that
 $f^\ell(U)$ $(\ell\in\{0,1,2,\ldots,p-1\})$
 are mutually disjoint. Set $\mathcal{O}_a:=\{f^\ell(a):\ell\in\{0,1,2,\ldots,p-1\}\}$ and
 $\mathcal{U}:=\bigcup_{\ell=0}^{p-1}f^\ell(U)$, so that $f(\mathcal{U})\subset\mathcal{U}$,
 $\mathcal{U}\cap U=U$ and $\mathcal{O}_a\cap U=\{a\}$.
 Decreasing $U$ if necessary, we can
 assume $f^{-1}(\mathcal{O}_a)\cap\mathcal{U}\subset\mathcal{O}_a$ and 
 $\mathcal{U}\cap C(f)\subset\mathcal{O}_a$ by 
 $\# f^{-1}(\mathcal{O}_a)<\infty$ and $\# C(f)<\infty$, respectively.

 Let $c\in C(f)$ satisfy $\bigcup_{j\in\bN\cup\{0\}}f^{-j}(c)\neq\{a\}$.
 If $(\bigcup_{j\in\bN\cup\{0\}}f^{-j}(c))\cap(U\setminus\{a\})\neq\emptyset$, 
 then $c\in\mathcal{U}\cap C(f)$ by $f(\mathcal{U})\subset\mathcal{U}$, 
 and $c\in\mathcal{O}_a$ by $\mathcal{U}\cap C(f)\subset\mathcal{O}_a$.
 Hence by $f^{-1}(\mathcal{O}_a)\cap\mathcal{U}\subset\mathcal{O}_a$, we have
 $(\bigcup_{j\in\bN\cup\{0\}}f^{-j}(c))\cap\mathcal{U}\subset\mathcal{O}_a$, 
 so $(\bigcup_{j\in\bN\cup\{0\}}f^{-j}(c))\cap U\subset\mathcal{O}_a\cap U=\{a\}$, 
 which is a contradiction. Hence 
 $\inf\{[a,w]:w\in(\bigcup_{j\in\bN\cup\{0\}}f^{-j}(c))\setminus\{a\}\}>0$,
 which with $\#\SAT(f)<\infty$ and $\#C(f)<\infty$ completes the proof of the former assertion. The latter assertion is obvious.
\end{proof}

\begin{lemma}\label{th:preperiodic}
\begin{multline*}
\sup_{n\in\bN}\left(\sup_{c\in(\bigcup_{j=0}^{n-1}f^{-j}(C(f)\cap\SAT(f)))\setminus\Fix(f^n)}
|\Phi_{g_f}(f^n(c),c)|\right)
\le-\log\delta+2\sup_{\sP^1}|g_f|<\infty. 
\end{multline*}
\end{lemma}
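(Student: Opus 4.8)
The plan is to unwind the definition of $\Phi_{g_f}$ and reduce the required estimate to Lemma \ref{th:distance}. Fix $n\in\bN$ and $c\in(\bigcup_{j=0}^{n-1}f^{-j}(C(f)\cap\SAT(f)))\setminus\Fix(f^n)$, and choose $j\in\{0,1,\ldots,n-1\}$ with $c':=f^j(c)\in C(f)\cap\SAT(f)$. Since both $c$ and $f^n(c)$ lie in $\bP^1$, one has $[f^n(c),c]_{\can}=[f^n(c),c]\in(0,1]$, so the triangle inequality applied to $\Phi_{g_f}(\cS,\cS')=\log[\cS,\cS']_{\can}-g_f(\cS)-g_f(\cS')$ gives
\begin{gather*}
 |\Phi_{g_f}(f^n(c),c)|\le -\log[f^n(c),c]+2\sup_{\sP^1}|g_f|,
\end{gather*}
and $\sup_{\sP^1}|g_f|<\infty$ by the continuity of $g_f$ on the compact space $\sP^1$. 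Hence it suffices to prove the uniform lower bound $[f^n(c),c]\ge\delta$, where $\delta>0$ is the constant furnished by Lemma \ref{th:distance}.

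To obtain this, I would first observe that $f^n(c)=f^{n-j}(c')$ lies in the periodic cycle of the superattracting periodic point $c'$, hence is itself a superattracting periodic point, so that $f^n(c)\in\SAT(f)$. Next I would verify the non-degeneracy hypothesis $\bigcup_{i\in\bN\cup\{0\}}f^{-i}(c')\neq\{f^n(c)\}$ needed to apply Lemma \ref{th:distance}: if equality held, then taking $i=0$ would force $c'=f^n(c)$, whence $c\in\bigcup_{j=0}^{n-1}f^{-j}(c')\subseteq\{f^n(c)\}=\{c'\}$ gives $c=c'$; but then the latter assertion of Lemma \ref{th:distance}, applied with both the point and the critical point taken to be $c'$, yields $c'\in\Fix(f)$, so $c=c'\in\Fix(f^n)$, contradicting the choice of $c$.

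With these two facts in hand, I would apply the former assertion of Lemma \ref{th:distance} with $a=f^n(c)\in\SAT(f)$ and with the critical point $c'$: every $w\in(\bigcup_{i\in\bN\cup\{0\}}f^{-i}(c'))\setminus\{f^n(c)\}$ satisfies $[f^n(c),w]\ge\delta$. Since $c\in\bigcup_{j=0}^{n-1}f^{-j}(c')\subseteq\bigcup_{i\in\bN\cup\{0\}}f^{-i}(c')$ and $c\neq f^n(c)$ (because $c\notin\Fix(f^n)$), the choice $w=c$ yields $[f^n(c),c]\ge\delta$, as desired, and the resulting bound $-\log\delta+2\sup_{\sP^1}|g_f|$ is finite and independent of both $n$ and $c$. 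The main obstacle is exactly the pair of verifications feeding into Lemma \ref{th:distance} — that $f^n(c)$ is again a superattracting periodic point, and that the backward orbit of $c'$ is not the singleton $\{f^n(c)\}$ — and both of these reduce, via the structure of $\SAT(f)$ and the second half of Lemma \ref{th:distance}, to the hypothesis $c\notin\Fix(f^n)$.
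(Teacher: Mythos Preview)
Your proof is correct and follows essentially the same route as the paper: reduce to the lower bound $[f^n(c),c]\ge\delta$ via the definition of $\Phi_{g_f}$, note $f^n(c)\in\SAT(f)$, verify the non-degeneracy hypothesis of Lemma~\ref{th:distance}, and apply that lemma with $a=f^n(c)$ and the critical point $c'$. One minor simplification: once you derive $c\in\bigcup_{j=0}^{n-1}f^{-j}(c')\subseteq\{f^n(c)\}$ you already have $c=f^n(c)$, which contradicts $c\notin\Fix(f^n)$ directly, so the subsequent appeal to the second part of Lemma~\ref{th:distance} is unnecessary.
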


\begin{proof}
%We claim 
For every $n\in\bN$ and 
every $c\in(\bigcup_{j=0}^{n-1}f^{-j}(C(f)\cap\SAT(f)))\setminus\Fix(f^n)$,
we have $f^n(c)\in\SAT(f)$
and $c\in\bigcup_{j=0}^{n-1}f^{-j}(c')$ for some $c'\in C(f)$.
Since $c\not\in\Fix(f^n)$, by the latter assertion of Lemma \ref{th:distance},
$\bigcup_{j\in\bN\cup\{0\}}f^{-j}(c')\neq\{f^n(c)\}$. Hence
by \eqref{eq:distance}, we have either $c=f^n(c)$ or $[f^n(c),c]\ge\delta$,
but the former possibility does not occur since $c\not\in\Fix(f^n)$.

%we have $w=a$, so $w=a=f^n(w)$, which contradicts $w\not\in\Fix(f^n)$.
Hence $\inf_{n\in\bN}(\inf_{c\in(\bigcup_{j=0}^{n-1}f^{-j}(C(f)\cap\SAT(f)))\setminus\Fix(f^n)}[f^n(c),c])\ge\delta$.
Now the proof is complete by the definition of $\Phi_{g_f}$.
\end{proof}

\begin{lemma}\label{th:Bottcher}
\begin{multline*}
 \sup_{n\in\bN}
\left(\sup_{c\in C(f^n)\setminus\Fix(f^n)}
\left|\int_{\SAT(f)}\Phi_{g_f}(c,\cdot)\rd[f^n=\Id](\cdot)\right|\right)\\
\le\left(-\log\delta+2\sup_{\sP^1}|g_f|\right)
\left(\sup_{n\in\bN}([f^n=\Id](\SAT(f)))\right)<\infty.
\end{multline*}
\end{lemma}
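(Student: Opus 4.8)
The plan is to bound the integrand $\Phi_{g_f}(c,\cdot)$ uniformly on $\SAT(f)$ with respect to the measure $[f^n=\Id]$, by using Lemma \ref{th:distance} to control the chordal distance from $c$ to each superattracting periodic point, and then to integrate trivially, estimating the total mass by $\#\SAT(f)$.

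First I would note that the restriction of $[f^n=\Id]$ to $\SAT(f)$ is supported on $\SAT(f)\cap\Fix(f^n)$, so for $c\in C(f^n)\setminus\Fix(f^n)$ the variable $a$ of integration ranges only over points $a\neq c$ (both lying in $\bP^1$), and hence $\Phi_{g_f}(c,a)=\log[c,a]-g_f(c)-g_f(a)$ is finite. Next, using $C(f^n)=\bigcup_{j=0}^{n-1}f^{-j}(C(f))$, every such $c$ lies in $\bigcup_{k\in\bN\cup\{0\}}f^{-k}(c')$ for some $c'\in C(f)$.

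Then I would apply Lemma \ref{th:distance} to the pair $(a,c')$ for each $a\in\SAT(f)$. If $\bigcup_{k\in\bN\cup\{0\}}f^{-k}(c')=\{a\}$, then by the latter assertion of that lemma this forced backward orbit lies in $\Fix(f)$, so $c\in\{a\}\subset\Fix(f)\subset\Fix(f^n)$, contradicting $c\notin\Fix(f^n)$; thus the degenerate case never occurs. Otherwise \eqref{eq:distance} yields $[a,c]\ge\delta$ (where we may harmlessly assume $\delta\le 1$, since $[\cdot,\cdot]\le 1$). Consequently $\log[c,a]_{\can}=\log[c,a]\in[\log\delta,0]$, whence $|\Phi_{g_f}(c,a)|\le-\log\delta+2\sup_{\sP^1}|g_f|$ for every $a\in\SAT(f)$ in the support of $[f^n=\Id]$; note $\sup_{\sP^1}|g_f|<\infty$ because $g_f$ is continuous on the compact space $\sP^1$.

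Integrating this pointwise bound against $[f^n=\Id]$ over $\SAT(f)$ gives
\[
\left|\int_{\SAT(f)}\Phi_{g_f}(c,\cdot)\rd[f^n=\Id](\cdot)\right|\le\bigl(-\log\delta+2\sup_{\sP^1}|g_f|\bigr)\cdot[f^n=\Id](\SAT(f)),
\]
and taking the supremum over $c\in C(f^n)\setminus\Fix(f^n)$ and over $n\in\bN$, together with $\sup_{n\in\bN}([f^n=\Id](\SAT(f)))\le\#\SAT(f)<\infty$, completes the proof. There is no serious obstacle; the only point requiring care is the exclusion of the degenerate case $\bigcup_{k}f^{-k}(c')=\{a\}$, which is exactly where the hypothesis $c\notin\Fix(f^n)$ enters, together with bookkeeping of multiplicities (already accounted for by working with the measure $[f^n=\Id]$).
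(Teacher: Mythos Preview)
Your proof is correct and follows essentially the same approach as the paper: you reduce to bounding $[c,a]$ from below by $\delta$ for $c\in C(f^n)\setminus\Fix(f^n)$ and $a\in\SAT(f)\cap\Fix(f^n)$, using Lemma~\ref{th:distance} together with the observation that the degenerate case $\bigcup_k f^{-k}(c')=\{a\}$ would force $c\in\Fix(f)\subset\Fix(f^n)$, and then conclude via the definition of $\Phi_{g_f}$ and the finiteness of $\sup_n[f^n=\Id](\SAT(f))$. Your write-up is in fact slightly more detailed than the paper's, but the argument is the same.
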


\begin{proof}
%We claim
%indeed, if there exist
For every $n\in\bN$, every $c\in C(f^n)\setminus\Fix(f^n)$,
and every $w\in\SAT(f)\cap\Fix(f^n)$,
% satisfying $[c,w]<\delta$,
we have $c\in\bigcup_{j=0}^{n-1}f^{-j}(c')$ for some $c'\in C(f)$.
Since $c\not\in\Fix(f^n)$, by the latter assertion of Lemma \ref{th:distance},
$\bigcup_{j\in\bN\cup\{0\}}f^{-j}(c')\neq\{w\}$.
Hence by \eqref{eq:distance}, we have either $c=w\in\Fix(f^n)$
or $[c,w]\ge\delta$,
but the former possibility does not occur since $c\not\in\Fix(f^n)$.

Hence $\inf_{n\in\bN}
(\inf_{c\in C(f^n)\setminus\Fix(f^n),w\in\SAT(f)\cap\Fix(f^n)}
[c,w])\ge\delta$.
Now the proof is complete by the definition of $\Phi_{g_f}$ and
$\sup_{n\in\bN}([f^n=\Id](\SAT(f)))<\infty$.
\end{proof}

\begin{lemma}\label{th:supreatt}
\begin{multline*}
\sup_{n\in\bN}\left(\sup_{c\in C(f^n)\cap\Fix(f^n)}
\left|\int_{\SAT(f)\setminus\{c\}}\Phi_{g_f}(c,\cdot)\rd[f^n=\Id](\cdot)\right|\right)\\
\le\left(-\log
\left(\inf_{c,c'\in\SAT(f):\, c\neq c'}[c,c']\right)+2\sup_{\sP^1}|g_f|\right)
\left(\sup_{n\in\bN}([f^n=\Id](\SAT(f)))\right)<\infty.
\end{multline*}
\end{lemma}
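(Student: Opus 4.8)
The plan is to bound the integrand $|\Phi_{g_f}(c,\cdot)|$ uniformly on the set $\SAT(f)\setminus\{c\}$ whenever $c\in C(f^n)\cap\Fix(f^n)$, exactly as in the proofs of Lemmas \ref{th:Bottcher} and \ref{th:supreatt}'s predecessors, and then multiply by the total mass $[f^n=\Id](\SAT(f))$, which is bounded by $\#\SAT(f)<\infty$. The key simplification here, compared with Lemma \ref{th:Bottcher}, is that both $c$ and the points of integration lie in $\SAT(f)$: since $c\in\Fix(f^n)$ we have $c\in\SAT(f^n)=\SAT(f)\cap\Fix(f^n)\subset\SAT(f)$, so for $w\in\SAT(f)\setminus\{c\}$ the chordal distance $[c,w]$ is at least $\inf_{a,a'\in\SAT(f):\,a\neq a'}[a,a']$, a strictly positive quantity because $\SAT(f)$ is a finite subset of $\bP^1$ and distinct classical points have positive chordal distance.

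First I would fix $n\in\bN$ and $c\in C(f^n)\cap\Fix(f^n)$. Then $c$ is a superattracting periodic point of $f$, i.e.\ $c\in\SAT(f)$, so for every $w\in\SAT(f)\setminus\{c\}$ we have $[c,w]\ge\inf_{a,a'\in\SAT(f):\,a\neq a'}[a,a']=:\eta>0$ (the infimum is over a finite set, hence attained and positive). By the definition of $\Phi_{g_f}$, namely $\Phi_{g_f}(\cS,\cS')=\log[\cS,\cS']_{\can}-g_f(\cS)-g_f(\cS')$, and since $[c,w]_{\can}=[c,w]$ for classical points and $[\cdot,\cdot]\le 1$, we get $|\Phi_{g_f}(c,w)|\le-\log\eta+2\sup_{\sP^1}|g_f|$ for all $w\in\SAT(f)\setminus\{c\}$.

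Next I would integrate this pointwise bound against the positive measure $[f^n=\Id]$ restricted to $\SAT(f)\setminus\{c\}$, obtaining
\begin{gather*}
\left|\int_{\SAT(f)\setminus\{c\}}\Phi_{g_f}(c,\cdot)\rd[f^n=\Id](\cdot)\right|
\le\left(-\log\eta+2\sup_{\sP^1}|g_f|\right)\cdot[f^n=\Id](\SAT(f)\setminus\{c\}),
\end{gather*}
and $[f^n=\Id](\SAT(f)\setminus\{c\})\le[f^n=\Id](\SAT(f))$. Taking the supremum over $c\in C(f^n)\cap\Fix(f^n)$ and then over $n\in\bN$, and using $\sup_{n\in\bN}([f^n=\Id](\SAT(f)))\le\#\SAT(f)<\infty$ from the start of Section \ref{sec:quantitative}, yields the claimed inequality and its finiteness.

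I do not anticipate a genuine obstacle here; the only point requiring a moment's care is verifying that $c\in C(f^n)\cap\Fix(f^n)$ forces $c\in\SAT(f)$ (so that the relevant chordal distances are distances between two superattracting periodic points), which is immediate from $\SAT(f^n)=\SAT(f)\cap\Fix(f^n)$ recorded in the proof of Lemma \ref{th:lower}, together with the observation that a point in $C(f^n)\cap\Fix(f^n)$ is by definition a superattracting fixed point of $f^n$, hence a superattracting periodic point of $f$. Everything else is the same routine estimate already carried out in the preceding lemmas.
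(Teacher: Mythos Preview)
Your proof is correct and follows essentially the same approach as the paper's: both observe that $c\in C(f^n)\cap\Fix(f^n)$ forces $c\in\SAT(f)$ (the paper phrases this as $\bigcup_{n\in\bN}(C(f^n)\cap\Fix(f^n))=\SAT(f)$), then use finiteness of $\SAT(f)$ to bound $[c,w]$ from below for $w\in\SAT(f)\setminus\{c\}$, apply the definition of $\Phi_{g_f}$, and invoke $\sup_{n\in\bN}([f^n=\Id](\SAT(f)))<\infty$. Your write-up simply spells out the routine estimate that the paper leaves implicit.
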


\begin{proof}
Since $\#\SAT(f)<\infty$, 
we have $\inf_{c,c'\in\SAT(f):\, c\neq c'}[c,c']\in(0,1)$.

Now the proof is complete by 
$\bigcup_{n\in\bN}(C(f^n)\cap\Fix(f^n))=\SAT(f)$,
the definition of $\Phi_{g_f}$, and
$\sup_{n\in\bN}([f^n=\Id](\SAT(f)))<\infty$.
\end{proof}

By Lemmas \ref{th:lower}, \ref{th:recurrence}, \ref{th:preperiodic}, 
\ref{th:Bottcher}, \ref{th:supreatt},
% (and $C(f^n)=\bigcup_{j\in\bN}f^{-j}(C(f))$),
we have
\begin{multline}
\frac{1}{d^n}\int_{\sP^1\setminus\SAT(f)}\log(f^\#)\rd[f^n=\Id]
-\frac{[f^n=\Id](\sP^1\setminus\SAT(f))}{d^n}L(f)\\
=O(1)+3\cdot\frac{2d^n-2}{nd^n}\cdot O(1)
-\frac{2-2d^{-n}}{n}\int_{\sP^1}\Phi(f^n,\Id)_{g_f}\rd\mu_f\\
=-\frac{2-2d^{-n}}{n}\int_{\sP^1}\Phi(f^n,\Id)_{g_f}\rd\mu_f+O(1)
\quad\text{as }n\to\infty.\label{eq:iterates}
\end{multline}

By an argument similar to (and simpler than) the above, the following also holds.
\begin{lemma}\label{th:simple}
\begin{multline}
\frac{1}{d^n}\int_{\sP^1\setminus\SAT(f)}\log(f^\#)\rd[f^n=\Id]
-\frac{[f^n=\Id](\sP^1\setminus\SAT(f))}{d^n}L(f)\\
=-\frac{2d-2}{d^n}\int_{\sP^1}\Phi(f^n,\Id)_{g_f}\rd\mu_f+O(nd^{-n})\quad\text{as }n\to\infty.\label{eq:simple}
\end{multline}
\end{lemma}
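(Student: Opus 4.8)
The plan is to mimic the proof of Lemma \ref{th:lower} but to use the \emph{unweighted} (or rather $g_f$-weighted at the first level, $n=1$) formula \eqref{eq:formula} applied directly to $f$, integrated against $[f^n=\Id]$, instead of applying \eqref{eq:formula} to $f^n$ and then invoking the chain rule to reintroduce a factor of $n$. Concretely, I would integrate the $n=1$ identity \eqref{eq:formula}, i.e.\ $\log(f^\#)=L(f)+\sum_{c\in C(f)}\Phi_{g_f}(\cdot,c)+2g_f\circ f-2g_f$, against $\rd[f^n=\Id]$ over $\sP^1\setminus\SAT(f)$. Using $f_*[f^n=\Id]=[f^n=\Id]$ on $\sP^1\setminus\SAT(f)$ (which holds because $f$ permutes the set of periodic points of period dividing $n$, and the non-superattracting ones carry no critical-point obstruction), the two terms $2g_f\circ f-2g_f$ integrate to $0$ exactly as in the displayed computation in Lemma \ref{th:lower}, \emph{except} for a boundary discrepancy coming from restricting to $\sP^1\setminus\SAT(f)$; that discrepancy is $O(nd^{-n})$ once multiplied by $d^{-n}$, since $[f^n=\Id](\SAT(f))$ is bounded and $\sup_{\sP^1}|g_f|<\infty$. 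This leaves
\[
\frac{1}{d^n}\int_{\sP^1\setminus\SAT(f)}\log(f^\#)\rd[f^n=\Id]
-\frac{[f^n=\Id](\sP^1\setminus\SAT(f))}{d^n}L(f)
=\frac{1}{d^n}\sum_{c\in C(f)}U_{g_f,[f^n=\Id]|(\sP^1\setminus\SAT(f))}(c)
\]
up to an $O(nd^{-n})$ error, where the sum ranges over the $2d-2$ points of $C(f)$.

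Next I would evaluate each term $U_{g_f,[f^n=\Id]|(\sP^1\setminus\SAT(f))}(c)$ using the Riesz-decomposition identity \eqref{eq:Riesz} for $f^n$, exactly as in the claim \eqref{eq:potentialresidual} but now for a \emph{fixed} $c\in C(f)$ (not $c\in C(f^n)$) and with $f^n$ in place of $f$. That is, $U_{g_f,[f^n=\Id]}(\cdot)=\Phi(f^n,\Id)_{g_f}(\cdot)-\int_{\sP^1}\Phi(f^n,\Id)_{g_f}\rd\mu_f$, so for $c\notin\Fix(f^n)$ one gets $\Phi_{g_f}(f^n(c),c)-\int_{\SAT(f)}\Phi_{g_f}(c,\cdot)\rd[f^n=\Id]-\int_{\sP^1}\Phi(f^n,\Id)_{g_f}\rd\mu_f$, and for $c\in\Fix(f^n)$ (i.e.\ $c\in\SAT(f)$) the term $\Phi_{g_f}(f^n(c),c)$ is replaced by $\lim_{z\to c}(\Phi_{g_f}(f^n(z),z)-\Phi_{g_f}(z,c))$, which by the same Taylor-expansion computation as in Lemma \ref{th:lower} equals $\log|(f^n)'(c)-1|$; since such $c$ is superattracting for $f^n$, $(f^n)'(c)=0$ and this is $\log|{-1}|=0$. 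Summing over the $2d-2$ values of $c$ and pulling out $\sum_{c\in C(f)}$ outside, the coefficient of $\int_{\sP^1}\Phi(f^n,\Id)_{g_f}\rd\mu_f$ is exactly $-(2d-2)$, giving the main term $-\frac{2d-2}{d^n}\int_{\sP^1}\Phi(f^n,\Id)_{g_f}\rd\mu_f$ of \eqref{eq:simple}.

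It then remains to show that the remaining contributions — namely $\frac{1}{d^n}\sum_{c\in C(f)\setminus\SAT(f)}\Phi_{g_f}(f^n(c),c)$, the superattracting-$c$ analogue, and $\frac{1}{d^n}\sum_{c\in C(f)}\int_{\SAT(f)(\setminus\{c\})}\Phi_{g_f}(c,\cdot)\rd[f^n=\Id]$ — are all $O(nd^{-n})$. These are the easy analogues of Lemmas \ref{th:recurrence}, \ref{th:preperiodic}, \ref{th:Bottcher}, \ref{th:supreatt}: the integrals over $\SAT(f)$ against $[f^n=\Id]$ are uniformly bounded in $n$ by those lemmas (the relevant distance lower bounds $[f^n(c),c]\ge\delta$, $[c,w]\ge\delta$, $[c,c']\ge\inf_{c\ne c'\in\SAT(f)}[c,c']$ all hold since $C(f)\subset C(f^n)$), and dividing a bounded quantity by $d^n$ gives $O(d^{-n})=O(nd^{-n})$; the term $\Phi_{g_f}(f^n(c),c)$ for $c\in C(f)\setminus\SAT(f)$ is bounded below by $O(n)$ via the Przytycki-lemma / Fatou-classification argument of Lemma \ref{th:recurrence} (this time there are only finitely many $c$, with no $d^j$-weighted preimage sum, so after division by $d^n$ it is $O(nd^{-n})$), and bounded above by $\frac{2d-2}{d^n}\cdot 2\sup_{\sP^1}|g_f|=O(d^{-n})$.

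The main obstacle, and the one place where this is genuinely \emph{simpler} than Lemma \ref{th:lower} rather than merely parallel, is that here the critical points stay at level $1$: the sum $\sum_{c\in C(f)}$ has only $2d-2$ terms independent of $n$, so the ``recurrence'' estimate need not be summed over $j=0,\dots,n-1$ preimages with weights $d^j$, and consequently the whole $O(1)$ bundle of error terms from \eqref{eq:iterates} is replaced here by an $O(nd^{-n})$ bundle after dividing by $d^n$. The one subtlety to handle carefully is the boundary discrepancy in the $2g_f\circ f-2g_f$ computation caused by excising $\SAT(f)$ from the domain of integration — one must check it only involves $[f^n=\Id](\SAT(f))\le\#\SAT(f)$ and $\sup_{\sP^1}|g_f|$, hence contributes $O(d^{-n})$ after normalization — but this is routine given $\#\SAT(f)<\infty$.
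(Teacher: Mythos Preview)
Your proposal is correct and follows essentially the same argument as the paper's own proof: integrate \eqref{eq:formula} for $f$ itself against $[f^n=\Id]$ over $\sP^1\setminus\SAT(f)$, apply \eqref{eq:potentialresidual} with $f^n$ in place of $f$ to each $c\in C(f)\subset C(f^n)$, and then bound the four resulting error terms \eqref{eq:firstsimple}--\eqref{eq:fourthsimple} by \eqref{eq:recurrence} and Lemmas \ref{th:preperiodic}--\ref{th:supreatt}. One small simplification: the ``boundary discrepancy'' you worry about in the $2g_f\circ f-2g_f$ term is in fact exactly zero, because $f$ permutes $\Fix(f^n)\setminus\SAT(f)$ preserving multiplicities (both $\Fix(f^n)$ and $\SAT(f)$ are $f$-invariant and the multiplier is a cycle invariant), so $f_*\bigl([f^n=\Id]|(\sP^1\setminus\SAT(f))\bigr)=[f^n=\Id]|(\sP^1\setminus\SAT(f))$ exactly --- but your over-estimate is harmless.
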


\begin{proof}
In the following, 
the sums over subsets of $C(f)$
take into account the multiplicity of each $c$ as a critical point of $f$.

For every $n\in\bN$, integrating both sides in \eqref{eq:formula} itself
against $\rd[f^n=\Id]$ over $\bP^1\setminus\SAT(f)$,
since $f_*[f^n=\Id]=[f^n=\Id]$ on $\sP^1\setminus\SAT(f)$, we have
 \begin{multline*}
 \int_{\sP^1\setminus\SAT(f)}\log(f^\#)\rd[f^n=\Id]
 -([f^n=\Id](\sP^1\setminus\SAT(f)))\cdot L(f)\\
 =\sum_{c\in C(f)}U_{g_f,[f^n=\Id]|(\sP^1\setminus\SAT(f))}(c),
 \end{multline*}
and by \eqref{eq:potentialresidual} applied to $f^n$,
for each $c\in C(f)(\subset C(f^n))$,
\begin{multline*}
U_{g_f,[f^n=\Id]|(\sP^1\setminus\SAT(f))}(c)
+\int_{\sP^1}\Phi(f^n,\Id)_{g_f}\rd\mu_f\\
=
\begin{cases}
 \Phi_{g_f}(f^n(c),c)
-\int_{\SAT(f)}\Phi_{g_f}(c,\cdot)\rd[f^n=\Id](\cdot) &
 \text{if }c\not\in\Fix(f^n),\\
\hspace*{30pt}-\int_{\SAT(f)\setminus\{c\}}\Phi_{g_f}(c,\cdot)\rd[f^n=\Id](\cdot)
& \text{if }c\in\Fix(f^n),
\end{cases}
%-\int_{\sP^1}\Phi(f,\Id)_{g_f}\rd\mu_f
\end{multline*}
where $g_{f^n}=g_f$, $\mu_{f^n}=\mu_f$, and $\SAT(f^n)=\SAT(f)\cap\Fix(f^n)$.
Hence for every $n\in\bN$,
\begin{multline*}
\sum_{c\in C(f)}U_{g_f,[f^n=\Id]|(\sP^1\setminus\SAT(f))}(c)
+(2d-2)\int_{\sP^1}\Phi(f^n,\Id)_{g_f}\rd\mu_f\\
=\sum_{c\in C(f)\setminus\Fix(f^n)}\Phi_{g_f}(f^n(c),c)
-\sum_{c\in C(f)\setminus\Fix(f^n)}
 \int_{\SAT(f)}\Phi_{g_f}(c,\cdot)\rd[f^n=\Id](\cdot)\\
-\sum_{c\in C(f)\cap\Fix(f^n)}\int_{\SAT(f)\setminus\{c\}}\Phi_{g_f}(c,\cdot)\rd[f^n=\Id](\cdot) 
\end{multline*}
so, by $(C(f)\setminus\SAT(f))\setminus\Fix(f^n)=C(f)\setminus\SAT(f)$, 
we have
\begin{align}
 \notag
&\frac{1}{d^n}\int_{\sP^1\setminus\SAT(f)}\log(f^\#)\rd[f^n=\Id]
 -\frac{[f^n=\Id](\sP^1\setminus\SAT(f))}{d^n}L(f)\\
\tag{\ref{eq:first}'} 
=&\frac{1}{d^n}\sum_{c\in C(f)\setminus\SAT(f)}\Phi_{g_f}(f^n(c),c)\label{eq:firstsimple}\\
\tag{\ref{eq:second}'} 
&+\frac{1}{d^n}\sum_{c\in(C(f)\cap\SAT(f))\setminus\Fix(f^n)}\Phi_{g_f}(f^n(c),c)\label{eq:secondsimple}\\
\tag{\ref{eq:third}'} 
&-\frac{1}{d^n}\sum_{c\in C(f)\setminus\Fix(f^n)}\int_{\SAT(f)}\Phi_{g_f}(c,\cdot)\rd[f^n=\Id](\cdot)\label{eq:thirdsimple}\\
\tag{\ref{eq:fourth}'}
&-\frac{1}{d^n}\sum_{c\in C(f)\cap\Fix(f^n)}\int_{\SAT(f)\setminus\{c\}}\Phi_{g_f}(c,\cdot)\rd[f^n=\Id](\cdot)\label{eq:fourthsimple}\\
\notag
&-\frac{2d-2}{d^n}\int_{\sP^1}\Phi(f^n,\Id)_{g_f}\rd\mu_f.
\end{align}
%In the right hand side,
By \eqref{eq:recurrence} and the definition of $\Phi_{g_f}$,
the term \eqref{eq:firstsimple} is estimated as
 \begin{align*}
 \frac{2d-2}{d^n}\cdot 2\sup_{\sP^1}|g_f|\ge
&\frac{1}{d^n}\sum_{c\in C(f)\setminus\SAT(f)}\Phi_{g_f}(f^n(c),c)\\
 \ge&\frac{1}{d^n}\sum_{c\in C(f)\setminus\SAT(f)}\log[f^n(c),c]
 -\frac{2d-2}{d^n}\cdot 2\sup_{\sP^1}|g_f|\\
 \ge&\frac{2d-2}{d^n}\cdot O(n)+O(d^{-n})=O(nd^{-n})\quad\text{as }n\to\infty,
\end{align*}
and since $C(f)\subset C(f^n)$,
by Lemmas \ref{th:preperiodic}, \ref{th:Bottcher}, and \ref{th:supreatt}
(or by arguments similar to and simpler than those in their proofs),
the terms \eqref{eq:secondsimple}, \eqref{eq:thirdsimple},
and \eqref{eq:fourthsimple} have 
the order $((2d-2)/d^n)\cdot O(1)=O(d^{-n})$ as $n\to\infty$.

Now the proof is complete.
\end{proof}

By \eqref{eq:iterates} and \eqref{eq:simple}, we have
\begin{multline*}
-\frac{2-2d^{-n}}{n}\int_{\sP^1}\Phi(f^n,\Id)_{g_f}\rd\mu_f+O(1)\\
=
-\frac{2d-2}{d^n}\int_{\sP^1}\Phi(f^n,\Id)_{g_f}\rd\mu_f+O(nd^{-n})\quad\text{as }n\to\infty,
\end{multline*}
which with $(0\neq)-(2-2d^{-n})/n+(2d-2)/d^n=O(n^{-1})$ as $n\to\infty$ yields
\begin{gather}
\int_{\sP^1}\Phi(f^n,\Id)_{g_f}\rd\mu_f=n\cdot O(1)=O(n)\quad\text{as }n\to\infty.\label{eq:dynamicallinear}
\end{gather}

\begin{proof}[Proof of Theorem $\ref{th:lyapunov}$]
 Once \eqref{eq:simple} and \eqref{eq:dynamicallinear} are at our disposal,
 we have
 \begin{multline*}
 \frac{1}{d^n}\int_{\sP^1\setminus\SAT(f)}\log(f^\#)\rd[f^n=\Id]
 -\frac{[f^n=\Id](\sP^1\setminus\SAT(f))}{d^n}L(f)\\
% =-\frac{2d-2}{d^n}\cdot O(n)+O(nd^{-n})
=O(nd^{-n}) \quad\text{as }n\to\infty,
%\tag{\ref{eq:quantitativeapprox}'}
 \end{multline*}
 the left hand side of which is computed as
 \begin{multline*}
 \frac{1}{d^n}\int_{\sP^1\setminus\SAT(f)}\log(f^\#)\rd[f^n=\Id]
 -\frac{[f^n=\Id](\sP^1\setminus\SAT(f))}{d^n}L(f)\\
 =\frac{1}{nd^n}\sum_{w\in\Fix(f^n)\setminus\SAT(f)}\log|(f^n)'(w)|-L(f)+O(d^{-n})
 \quad\text{as }n\to\infty
 \end{multline*}
 by the chain rule, $[f^n=\Id](\sP^1)=d^n+1$,
 and $\sup_{n\in\bN}([f^n=\Id](\SAT(f)))<\infty$. 
 Now the proof of $\eqref{eq:quantitativeapprox}$ is complete.

Let us show \eqref{eq:exact} using $\eqref{eq:quantitativeapprox}$:
for the details on the {\itshape M\"obius function} $\mu:\bN\to\{-1,0,1\}$,
which satisfies $\mu(1)=1$ by the definition, and
the {\itshape M\"obius inversion formula} below, see, e.g, \cite[\S 2]{Apostol}. 

For every $n\in\bN$, using the chain rule, we have
\begin{gather*}
 \frac{1}{n}\sum_{w\in\Fix(f^n)\setminus\SAT(f)}\log|(f^n)'(w)|
=\sum_{m\in\bN:\, m|n}
\frac{1}{m}\sum_{w\in\Fix^*(f^m)\setminus\SAT(f)}\log|(f^m)'(w)|,
\end{gather*}
which is equivalent to
%We remark that $\mu(1)=1$.
\begin{multline}
\frac{1}{n}\sum_{w\in\Fix^*(f^n)\setminus\SAT(f)}\log|(f^n)'(w)|\\
=\sum_{m\in\bN:\, m|n}\mu\left(\frac{n}{m}\right)\cdot\frac{1}{m}\sum_{w\in\Fix(f^m)\setminus\SAT(f)}\log|(f^m)'(w)|\label{eq:mobius}
\end{multline}  
by the M\"obius inversion formula.

By $\#\SAT(f)<\infty$, for every $n\in\bN$ large enough,
we have $\Fix^*(f^n)\setminus\SAT(f)=\Fix^*(f^n)$. Hence
% for every $n\in\bN$ large enough, 
by \eqref{eq:mobius}, we have
\begin{multline*}
\left|\frac{1}{nd^n}\sum_{w\in\Fix^*(f^n)}\log|(f^n)'(w)|
-\frac{1}{nd^n}\sum_{w\in\Fix(f^n)\setminus\SAT(f)}\log|(f^n)'(w)|\right|\\
\le\frac{1}{d^n}\sum_{m\in\bN:\, m|n\text{ and }m<n}d^m
\left(\frac{1}{md^m}\sum_{w\in\Fix(f^m)\setminus\SAT(f)}\log|(f^m)'(w)|\right)\\
\le O(1)\cdot\frac{1}{d^n}\sum_{m=1}^{n/2}d^m
=O(d^{-n/2})\quad\text{as }n\to\infty,
\end{multline*}
where the second inequality is by
\eqref{eq:quantitativeapprox} and $\sup\{m\in\bN:m|n\text{ and }m<n\}\le n/2$.
%and $\sum_{m\in\bN:m<n/2}md^m=O(nd^{n/2})$ as $n\to\infty$. 
Now the proof
of \eqref{eq:exact} is complete by \eqref{eq:quantitativeapprox}.
\end{proof}

\begin{remark}
The order estimate
\begin{gather*}
\frac{1}{md^m}\sum_{w\in\Fix(f^m)\setminus\SAT(f)}\log|(f^m)'(w)|=O(1)\quad\text{as }m\to\infty
\end{gather*}
is immediate if $f$ has at most
finitely many attracting periodic points in $\bP^1$.
\end{remark} 

\section{Proof of Theorem \ref{th:repelling}}
\label{sec:complex}

Suppose in addition that
$f$ has at most finitely many attracting
% but not superattracting
periodic points in $\bP^1$, or equivalently,
that there exists $N\in\bN$ such that for every $n\ge N$ and
every $w\in\Fix^*(f^n)$, we have $|(f^n)'(w)|\ge 1$.
Hence \eqref{eq:exact} implies \eqref{eq:strict}. 

For every $n\ge N$,
\begin{align*}
 &\frac{1}{nd^n}\sum_{w\in\Fix(f^n)\setminus\SAT(f)}\log|(f^n)'(w)|\\
 =&\frac{1}{nd^n}\sum_{m\in\bN:\, m|n\text{ and }m\ge N}\left(\sum_{w\in R_m^*(f)}\log|(f^n)'(w)|\right)+O((nd^n)^{-1})\\
=&\frac{1}{nd^n}\sum_{m\in\bN:\, m|n}\left(\sum_{w\in R_m^*(f)}\log|(f^n)'(w)|\right)+O((nd^n)^{-1})\\
 =&\frac{1}{nd^n}\sum_{w\in R(f^n)}\log|(f^n)'(w)|+O((nd^n)^{-1})
 \quad\text{as }n\to\infty,
 \end{align*}
so \eqref{eq:quantitativeapprox} implies \eqref{eq:repelling}. \qed

\begin{acknowledgement}
 The author thanks the referee for a very careful scrutiny and
 invaluable comments.
 This research was partially supported by JSPS Grant-in-Aid for Young Scientists (B), 24740087.
\end{acknowledgement} 

%\bibliographystyle{jipsj}
%\bibliography{papers,books}

\begin{thebibliography}{10}

\bibitem{Apostol}
{\sc Apostol,~T.~M.} {\em Introduction to analytic number theory}, Springer
  (1976).

\bibitem{Baker09}
{\sc Baker,~M.} A finiteness theorem for canonical heights attached to rational
  maps over function fields, {\em Journal f{\"u}r die reine und angewandte
  Mathematik (Crelles Journal)}, {\bf 626} (2009), 205--233.

\bibitem{BR06}
{\sc Baker,~M.~H.{\rm\ and }Rumely,~R.} Equidistribution of small points,
  rational dynamics, and potential theory, {\em Ann. Inst. Fourier (Grenoble)},
  {\bf 56}, 3 (2006), 625--688.

\bibitem{BR10}
{\sc Baker,~M.{\rm\ and }Rumely,~R.} {\em Potential theory and dynamics on the
  {B}erkovich projective line}, Vol. 159 of {\em Mathematical Surveys and
  Monographs}, American Mathematical Society, Providence, RI (2010).

\bibitem{Benedetto10}
{\sc Benedetto,~R.} Non-archimedean dynamics in dimension one: lecture notes,
  {\em Preprint. Available at http://math.arizona.edu/\~{}swc/aws/2010/}
  (2010).

\bibitem{BertelootLyapunov}
{\sc Berteloot,~F.} Lyapunov exponent of a rational map and multipliers of
  repelling cycles, {\em Riv. Mat. Univ. Parma (N.S.)}, {\bf 1}, 2 (2010),
  263--269.

\bibitem{BDM08}
{\sc Berteloot,~F., Dupont,~C.{\rm\ and }Molino,~L.} Normalization of bundle
  holomorphic contractions and applications to dynamics, {\em Ann. Inst.
  Fourier (Grenoble)}, {\bf 58}, 6 (2008), 2137--2168.

\bibitem{BM01}
{\sc Berteloot,~F.{\rm\ and }Mayer,~V.} {\em Rudiments de dynamique
  holomorphe}, Vol.~7 of {\em Cours Sp\'ecialis\'es [Specialized Courses]},
  Soci\'et\'e Math\'ematique de France, Paris (2001).

\bibitem{ChambertLoir06}
{\sc Chambert-Loir,~A.} Mesures et \'equidistribution sur les espaces de
  {B}erkovich, {\em J. Reine Angew. Math.}, {\bf 595} (2006), 215--235.

\bibitem{DeMarco03}
{\sc DeMarco,~L.} Dynamics of rational maps: {L}yapunov exponents,
  bifurcations, and capacity, {\em Math. Ann.}, {\bf 326}, 1 (2003), 43--73.

\bibitem{FJbook}
{\sc Favre,~C.{\rm\ and }Jonsson,~M.} {\em The valuative tree}, Vol. 1853 of
  {\em Lecture Notes in Mathematics}, Springer-Verlag, Berlin (2004).

\bibitem{FR09}
{\sc Favre,~C.{\rm\ and }Rivera-Letelier,~J.} Th\'eorie ergodique des fractions
  rationnelles sur un corps ultram\'etrique, {\em Proc. Lond. Math. Soc. (3)},
  {\bf 100}, 1 (2010), 116--154.

\bibitem{Jonsson98}
{\sc Jonsson,~M.} Sums of {L}yapunov exponents for some polynomial maps of
  {${\bf C}\sp 2$}, {\em Ergodic Theory Dynam. Systems}, {\bf 18}, 3 (1998),
  613--630.

\bibitem{Jonsson12}
{\sc {Jonsson},~M.} {Dynamics on Berkovich spaces in low dimensions}, {\em
  ArXiv e-prints} (Jan. 2012).

\bibitem{Lyubich83}
{\sc Ljubich,~M.~J.} Entropy properties of rational endomorphisms of the
  {R}iemann sphere, {\em Ergodic Theory Dynam. Systems}, {\bf 3}, 3 (1983),
  351--385.

\bibitem{Milnor3rd}
{\sc Milnor,~J.} {\em Dynamics in one complex variable}, Vol. 160 of {\em
  Annals of Mathematics Studies}, Princeton University Press, Princeton, NJ,
  third edition (2006).

\bibitem{OkuLog}
{\sc Okuyama,~Y.} {Repelling periodic points and logarithmic equidistribution
  in non-archimedean dynamics.}, {\em Acta Arith.}, {\bf 152}, 3 ({2012}),
  267--277.

\bibitem{Okucharacterization}
{\sc Okuyama,~Y.} {Adelic equidistribution, characterization of
  equidistribution, and a general equidistribution theorem in non-archimedean
  dynamics}, {\em Acta. Arith.}, {\bf 161}, 2 ({2013}), 101--125.

\bibitem{OkuFekete}
{\sc Okuyama,~Y.} Fekete configuration, quantitative equidistribution and
  wandering critical orbits in non-archimedean dynamics, {\em Math. Z.}, {\bf
  273}, 3-4 (2013), 811--837.

\bibitem{Przytycki93}
{\sc Przytycki,~F.} Lyapunov characteristic exponents are nonnegative, {\em
  Proc. Amer. Math. Soc.}, {\bf 119}, 1 (1993), 309--317.

\bibitem{Juan03}
{\sc Rivera-Letelier,~J.} Dynamique des fonctions rationnelles sur des corps
  locaux, {\em Ast\'erisque}, 287 (2003), xv, 147--230, Geometric methods in
  dynamics. II.

\bibitem{SilvermanDynamics}
{\sc Silverman,~J.~H.} {\em The arithmetic of dynamical systems}, Vol. 241 of
  {\em Graduate Texts in Mathematics}, Springer, New York (2007).

\bibitem{Sodin92}
{\sc Sodin,~M.} Value distribution of sequences of rational functions, Entire
  and subharmonic functions, Vol.~11 of {\em Adv. Soviet Math.}, Amer. Math.
  Soc., Providence, RI (1992),  7--20.

\bibitem{ST05}
{\sc {Szpiro},~L.{\rm\ and }{Tucker},~T.~J.} Equidistribution and generalized
  Mahler measures, {\itshape Number Theory, Analysis and Geometry: In Memory of
  Serge Lang}, Springer (2012),  609--638 (preprint version is available at
  http://arxiv.org/abs/math/0510404).

\bibitem{Thuillierthesis}
{\sc Thuillier,~A.} {\em Th{\'e}orie du potentiel sur les courbes en
  g{\'e}om{\'e}trie analytique non archim{\'e}dienne. Applications {\`a} la
  th{\'e}orie d'Arakelov}, PhD thesis, Universit{\'e} Rennes 1 (2005).

\bibitem{Tsuji59}
{\sc Tsuji,~M.} {\em Potential theory in modern function theory}, Chelsea
  Publishing Co., New York (1975), Reprinting of the 1959 original.

\end{thebibliography}

\def\cprime{$'$}

\end{document}